\newcounter{results}[section] % Uniform counters for lemmas, theorems, propositions etc
\theoremstyle{plain}
\newtheorem{theorem}[results]{Theorem}
\newtheorem{lemma}[results]{Lemma}
\newtheorem{proposition}[results]{Proposition}
\newtheorem{corollary}[results]{Corollary}
\newtheorem*{theorem*}{Theorem}
\newtheorem*{lemma*}{Lemma}
\newtheorem*{proposition*}{Proposition}
\newtheorem*{corollary*}{Corollary}
\newtheorem*{exercise*}{Exercise}
\newtheorem*{fact*}{Fact}
\newtheorem*{claim*}{Claim}
\newtheorem*{observation*}{Observation}
\newtheorem*{conjecture*}{Conjecture}
\theoremstyle{remark}
\newtheorem{remark}[results]{Remark}
\newtheorem*{remark*}{Remark}
\newtheorem*{question*}{Question}
\theoremstyle{definition}
\newtheorem{definition}[results]{Definition}
\newtheorem*{definition*}{Definition}
\newtheorem*{example*}{Example}
\numberwithin{equation}{section}
\newenvironment{customthm}[1]
  {\innercustomthm}
  {\endinnercustomthm}
        \renewcommand{\comma}{\ensuremath{\, \text{, }}}
        \newcommand{\comma}{\ensuremath{\, \text{, }}}
\newcommand{\N}{\ensuremath{\mathbb N}}%Natural numbers
\newcommand{\Z}{\ensuremath{\mathbb Z}}%Integers
\newcommand{\R}{\ensuremath{\mathbb R}}%Real numbers
\newcommand{\id}{\ensuremath{\mathrm{id}}}% Identity
\DeclareMathOperator{\Out}{Out}
\DeclareMathOperator{\MCG}{MCG}
\colorlet{myGray}{gray}
\colorlet{myBlue}{blue}
\colorlet{myBlack}{black}
\colorlet{myBackground}{gray!10}
\DeclareMathOperator{\Cay}{Cay}
\DeclareMathOperator{\Inn}{Inn}
\DeclareMathOperator{\Wh}{Wh}
\DeclareMathOperator{\sep}{sep}
\DeclareMathOperator{\prim}{prim}
\DeclareMathOperator{\cut}{cut}
\DeclareMathOperator{\Aut}{Aut}
\DeclareMathOperator{\Axis}{Axis}
\newcommand{\ES}[0]{\mathcal{ES}}
\DeclareMathOperator{\Core}{Min}
\newcommand{\cC}[0]{\mathcal{C}}
\newcommand{\bZ}[0]{\mathbb{Z}}
\renewcommand{\Omega}[0]{\Wh}
\newcommand{\entrye}{e_\mathrm{entry}}
\newcommand{\exite}{e_\mathrm{exit}}
\newcommand{\entryv}{v_\mathrm{entry}}
\newcommand{\exitv}{v_\mathrm{exit}}
\title{The infinite dimensional geometry of conjugation invariant generating sets}
\author{Sabine Chu}
\author{George Domat}
\author{Christine Gao}
\author{Ananya Prasanna}
\author{Alex Wright}
\begin{document}

\begin{abstract}
    We consider a number of examples of groups together with an infinite conjugation invariant generating set, including: the free group with the generating set of all separable  elements; surface groups with the generating set of all  non-filling curves; mapping class groups and outer automorphism groups of free groups with the generating sets of all reducible elements; and groups with suitable actions on Gromov hyperbolic spaces with a generating set of elliptic elements. Building on work of Brandenbursky-Gal-K\c{e}dra-Marcinkowski, in these Cayley graphs we show that there are quasi-isometrically embedded copies of $\Z^m$ for all $m\geq1$. A corollary is that these Cayley graphs have infinite asymptotic dimension.
    
    By additionally building a new subsurface projection analogue for the free splitting graph, which is valued in the above Cayley graph of the free group and may be of independent interest, we are able to recover  Sabalka-Savchuk's result that the edge-splitting graph of the free group has quasi-isometrically embedded copies of $\Z^m$ for all $m\geq1$.
%
    %Our analysis for Cayley graphs builds on a construction of Brandenbursky-Gal-K\c{e}dra-Marcinkowski using quasi-morphisms. 
    %We observe in particular that the Cayley graph of  a closed surface group with the generating set of all simple closed curves is not hyperbolic, answering a question of Margalit-Putman.
\end{abstract}

\maketitle

\thispagestyle{empty}

%\setcounter{tocdepth}{1}
%\tableofcontents

%%%%%%%%%%%%%%%%%%%%%%%%%%%%%%%%%%%%%%%%%%%%%%%%%%%%%%%%%%%%%%%%
%%%%%%%%%%%%%%%%%%%%%%%%%%%%%%%%%%%%%%%%%%%%%%%%%%%%%%%%%%%%%%%%
\section{Introduction}\label{sec:intro}
%%%%%%%%%%%%%%%%%%%%%%%%%%%%%%%%%%%%%%%%%%%%%%%%%%%%%%%%%%%%%%%%
%%%%%%%%%%%%%%%%%%%%%%%%%%%%%%%%%%%%%%%%%%%%%%%%%%%%%%%%%%%%%%%%

The goal of this paper is to show that certain spaces arising in the context of Gromov hyperbolicity are in fact so far from being Gromov hyperbolic that they have quasi-flats of all dimensions. These flats are built using  Abelian behavior and certified using quasi-morphisms. 

%%%%%%%%%%%%%%%%%%%%%%%%%%%%%%%%%%%%%%%%%%%%%%%%%%%%%%%%%%%%%%%%
\subsection{Abelian behavior for conjugation invariant generating sets}
%%%%%%%%%%%%%%%%%%%%%%%%%%%%%%%%%%%%%%%%%%%%%%%%%%%%%%%%%%%%%%%%
%%%%%%%%%%%%%%%%%%%%%%%%%%%%%%%%%%%%%%%%%%%%%%%%%%%%%%%%%%%%%%%%
Let $G$ be any group and $S$ any conjugation invariant generating set for $G$. This $S$ is typically infinite. 

By definition, the Cayley graph $\Cay(G,S)$ has edges from $g$ to $sg$ for $g\in G$ and $s\in S$. Suppose $g,h\in G$. If $G$ were Abelian, we would have an edge from $gh$ to $gsh$, because $gsh=sgh$. The assumption that $S$ is conjugation invariant also gives an edge from $gh$ to $gsh$, now because 
$$gsh = (gsg^{-1})(gh).$$
Thus, Cayley graphs for conjugation invariant generating sets have some commonalities with Cayley graphs of Abelian groups. In particular, we get that if $p_1, \ldots, p_m \in G$ then the map
$$f:\bZ^m \to \Cay(G,S)$$
    defined by 
\begin{equation}\label{E:fdef}
f(i_1, \ldots, i_m)= p_1^{i_1} \cdots p_m^{i_m}
\end{equation}
is Lipschitz.

%%%%%%%%%%%%%%%%%%%%%%%%%%%%%%%%%%%%%%%%%%%%%%%%%%%%%%%%%%%%%%%%
\subsection{From quasi-morphisms to quasi-flats}
%%%%%%%%%%%%%%%%%%%%%%%%%%%%%%%%%%%%%%%%%%%%%%%%%%%%%%%%%%%%%%%%

In \cite[Theorem 3.3]{CancelationNorm} Brandenbursky-Gal-K\c{e}dra-Marcinkowski consider a group $G$ together with a generating set consisting of finitely many conjugacy classes of $G$. They show that if the group admits $m$ linearly independent homogeneous quasi-morphisms, then there is a quasi-isometric embedding of $\bZ^m$ into this Cayley graph. The proof, which is short and elegant, also gives the following more general statement. 

\begin{theorem}\label{thm:qiembedding}
Let $S$ be a conjugation invariant generating set for a group $G$. Suppose $G$ has $m$ linearly independent homogeneous quasi-morphisms that are bounded on $S$. Then there is a quasi-isometric embedding of $\bZ^m$ into $\Cay(G,S)$. 
\end{theorem}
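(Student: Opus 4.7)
The upper bound (Lipschitz property of $f$) is already observed in the preceding subsection, so the task is to produce a matching lower bound $\|f(i_1,\ldots,i_m)\|_S \geq c \|(i_1,\ldots,i_m)\| - C$.

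The strategy is to compare the $S$-word length to the values of the given quasi-morphisms. Let $\phi_1,\ldots,\phi_m$ be the linearly independent homogeneous quasi-morphisms, each bounded on $S$ by some constant $M$ and each with defect at most $D$. The first key observation is that for any $g\in G$ with $\|g\|_S = \ell$, writing $g = s_1 \cdots s_\ell$ with $s_k \in S$ and iterating the quasi-morphism inequality gives
\[
|\phi_i(g)| \;\leq\; \sum_{k=1}^\ell |\phi_i(s_k)| \;+\; (\ell-1)D \;\leq\; \ell(M+D),
\]
so each $\phi_i$ is Lipschitz on $\Cay(G,S)$.

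The second step is to choose the elements $p_1,\ldots,p_m$. Since the $\phi_i$ are linearly independent as $\R$-valued functions on $G$, a standard linear-algebra argument produces elements $p_1,\ldots,p_m \in G$ such that the $m\times m$ matrix $A = (\phi_i(p_j))_{i,j}$ is invertible. Define $f$ as in \eqref{E:fdef}. Using homogeneity ($\phi_i(p_j^{i_j}) = i_j\phi_i(p_j)$) together with $m-1$ applications of the quasi-morphism inequality, we get
\[
\phi_i\bigl(f(i_1,\ldots,i_m)\bigr) \;=\; \sum_{j=1}^m i_j \phi_i(p_j) \;+\; E_i, \qquad |E_i| \leq (m-1)D.
\]
In vector form, $\vec\phi(f(\vec{\imath})) = A\vec{\imath} + \vec{E}$ with $\|\vec{E}\|$ bounded independently of $\vec{\imath}$.

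Combining these two steps closes the argument: on one hand $\|\vec\phi(f(\vec{\imath}))\| \leq \sqrt{m}(M+D)\, \|f(\vec{\imath})\|_S$ from the Lipschitz property of each $\phi_i$, and on the other hand
\[
\|\vec\phi(f(\vec{\imath}))\| \;\geq\; \|A\vec{\imath}\| - \|\vec{E}\| \;\geq\; \|A^{-1}\|^{-1}\, \|\vec{\imath}\| \;-\; \|\vec{E}\|.
\]
Comparing the two gives a linear lower bound on $\|f(\vec\imath)\|_S$, and combined with the Lipschitz upper bound this exhibits $f$ as a quasi-isometric embedding.

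The main obstacle is conceptually minor: one simply has to justify the choice of the $p_j$ making $A$ invertible, which is a standard extraction argument using the linear independence hypothesis (for instance, choose $p_1$ with some $\phi_{i_1}(p_1)\neq 0$, then inductively choose $p_k$ so that the restriction of $\phi_{i_k}$ to the span complementary to previously chosen rows is nonzero). All remaining ingredients are bookkeeping with the defect constants $D(\phi_i)$ and the sup-bounds $M_i$ on $S$.
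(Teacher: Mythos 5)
Your overall strategy is exactly the paper's: the quasi-morphisms are Lipschitz on $\Cay(G,S)$ because they are bounded on $S$ and have bounded defect, and evaluating them on image points of $f$ yields the lower bound. The only cosmetic difference is that the paper first replaces $\phi_1,\ldots,\phi_m$ by linear combinations $q_1,\ldots,q_m$ satisfying $q_k(p_j)=\delta_{kj}$ (citing the extraction lemma from the autonomous-metric literature), whereas you keep the original quasi-morphisms and invert the matrix $A=(\phi_i(p_j))$ at the end; these are the same computation, and your justification of why suitable $p_j$ exist is the content of that extraction lemma.

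There is, however, one genuine (though easily repaired) gap in what you wrote: a quasi-isometric embedding requires a lower bound on $d\bigl(f(\vec{\imath}),f(\vec{\jmath})\bigr)$ for \emph{all} pairs $\vec{\imath},\vec{\jmath}\in\bZ^m$, but your argument only bounds $\|f(\vec{\imath})\|_S$ from below, i.e.\ the distance to the identity. That alone does not rule out distant lattice points being sent to nearby group elements. The repair is precisely the estimate the paper records: since $d\bigl(f(\vec{\imath}),f(\vec{\jmath})\bigr)=\|f(\vec{\imath})f(\vec{\jmath})^{-1}\|_S$ and
\[
\phi_i\bigl(f(\vec{\imath})f(\vec{\jmath})^{-1}\bigr)=\sum_{k=1}^m (i_k-j_k)\,\phi_i(p_k)+O(1),
\]
where the $O(1)$ depends only on $m$ and the defects (apply the quasi-morphism inequality at most $2m-2$ times to the product $p_1^{i_1}\cdots p_{m-1}^{i_{m-1}}p_m^{i_m-j_m}p_{m-1}^{-j_{m-1}}\cdots p_1^{-j_1}$ and use homogeneity on each factor), your two-sided comparison applies verbatim with $\vec{\imath}$ replaced by $\vec{\imath}-\vec{\jmath}$. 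With this insertion the argument is complete and matches the paper's.
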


Note that if $S$ consists of finitely many conjugacy classes, the requirement that the homogeneous quasi-morphisms are bounded on $S$ is automatic; see for example \cite[Section 2.2]{scl} for this and other standard facts on quasi-morphisms.   

The outline of the proof is as follows. Using linear combinations as in \cite[Lemma 3.10]{Autonomous} we obtain quasi-morphisms $q_1, \ldots, q_m$ with the same properties such that there are group elements $p_1, \ldots, p_m$ with $$q_k(p_j)=\delta_{kj}.$$
With this choice of the $p_j$, consider the map $f$ defined in \eqref{E:fdef}, and observe that 
$$q_k(f(i_1, \ldots, i_m) f(i_1', \ldots, i_m')^{-1})= i_k-i_k'+ O(1).$$
Here the  $O(1)$ error term depends only on $m$ and the defect of $q_k$. 
Since $q_k$ is bounded on $S$ it defines a Lipschitz map on $\Cay(G,S)$, so this is enough to certify that $f$ is a quasi-isometric embedding. 

One example of a conjugation invariant generating set is the set of all commutators, considered as a generating set for the commutator subgroup, and this leads to the theory of commutator length. In this context (and the more general context of verbal subgroups) Calegari and Zhuang previously applied the Lipschitz property of \cref{E:fdef} to give connections between asymptotic cones and quasi-morphisms \cite[Section 3]{CalegariZhuang}.

%%%%%%%%%%%%%%%%%%%%%%%%%%%%%%%%%%%%%%%%%%%%%%%%%%%%%%%%%%%%%%%%
\subsection{Quasi-morphisms coming from hyperbolic actions}
%%%%%%%%%%%%%%%%%%%%%%%%%%%%%%%%%%%%%%%%%%%%%%%%%%%%%%%%%%%%%%%%
Applying  \cref{thm:qiembedding} together with the Bestvina-Fujiwara \cite{Fujiwara1998,bf2002} construction, we obtain the following. 

\begin{theorem}\label{thm:mainthmwpd}
    Let $G$ be a group that admits a non-elementary action on a geodesic $\delta$-hyperbolic space $X$ with a WPD element. If $S$ is a conjugation-invariant generating set for $G$ such that each $s \in S$ acts elliptically, then $\Z^{m}$ quasi-isometrically embeds into $\Cay(G,S)$ for all $m \in \N$.
\end{theorem}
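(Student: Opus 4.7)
The plan is to apply \cref{thm:qiembedding}, so for each $m\in\N$ I need to produce $m$ linearly independent homogeneous quasi-morphisms on $G$ that are bounded on $S$. The natural source for these is the Bestvina-Fujiwara construction \cite{Fujiwara1998,bf2002}.

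First, I would invoke the Bestvina-Fujiwara theorem: under the hypotheses of a non-elementary action on a $\delta$-hyperbolic space with a WPD element, the space of homogeneous quasi-morphisms on $G$ modulo homomorphisms and bounded functions is infinite dimensional. This lets me pick $m$ linearly independent homogeneous quasi-morphisms $\hat q_1,\ldots,\hat q_m$, each obtained by homogenizing a counting-type quasi-morphism $q_i$ defined via a quasi-axis of a WPD loxodromic element in $X$.

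The key step is to check that every such $\hat q_i$ vanishes on any elliptically acting element, and hence on all of $S$. Each non-homogeneous $q_i$ is a signed count of translates of a fixed axial segment appearing along a geodesic from a basepoint $x_0$ to $g\cdot x_0$, and in particular satisfies a bound $|q_i(g)|\leq C_i\,\dist(x_0,g\cdot x_0)$ for some constant $C_i$. If $s\in S$ acts elliptically on $X$, its orbit is bounded, so there exists $D(s)$ with $\dist(x_0,s^n\cdot x_0)\leq D(s)$ for all $n$. Consequently
\[
|\hat q_i(s)|=\lim_{n\to\infty}\frac{|q_i(s^n)|}{n}\leq\lim_{n\to\infty}\frac{C_i\,D(s)}{n}=0,
\]
so $\hat q_i$ vanishes identically on $S$ and is trivially bounded there. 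Applying \cref{thm:qiembedding} to $\hat q_1,\ldots,\hat q_m$ then yields the claimed quasi-isometric embedding $\Z^m\hookrightarrow\Cay(G,S)$.

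The main point requiring care is verifying the distance bound $|q_i(g)|\leq C_i\,\dist(x_0,g\cdot x_0)$ for the specific form of Bestvina-Fujiwara counting quasi-morphisms, which uses their explicit construction rather than only the abstract existence statement. Beyond this, the argument is a straightforward concatenation of known results.
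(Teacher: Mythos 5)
Your proposal is correct and follows essentially the same route as the paper: invoke the Bestvina--Fujiwara construction to obtain infinitely many linearly independent homogeneous counting quasi-morphisms, check that they vanish on elliptic elements, and feed them into \cref{thm:qiembedding}. The only difference is cosmetic --- where you verify the vanishing on elliptics directly from the linear bound $|q_i(g)|\leq C_i\,\dist(x_0,g\cdot x_0)$ satisfied by Fujiwara's counting quasi-morphisms, the paper cites \cite[Lemma 3.9]{Fujiwara1998} (recorded as \cref{cor:ellipticzero}) for the same fact.
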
 

In fact this is true even if the $s\in S$ are not elliptic but instead have uniformly bounded translation length, but the elliptic case is already sufficient for applications and so we restrict to that level of generality. 

\begin{corollary} \label{cor:WPDapps}
    For all $m \in \N$, $\Z^{m}$ quasi-isometrically embeds into the Cayley graphs for the following pairs of groups and generating sets:
    \begin{enumerate}[(1)]
        \item The mapping class group of an orientable surface of genus $g$ with $p$ punctures, where $3g+p-3 >1$, with the generating set of all reducible elements.
        \item The fundamental group of a closed surface of genus larger than $2$ with the generating set of all  non-filling curves. 
        \item The outer automorphism group of a free group of rank at least two with the generating set of all reducible outer automorphisms. 
    \end{enumerate}
    In each case, the same result applies for any conjugation invariant generating set contained in the given generating set. 
\end{corollary}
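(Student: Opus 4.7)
The plan is to apply \cref{thm:mainthmwpd} to each case by exhibiting, for each group $G$ and generating set $S$, a non-elementary action of $G$ on a Gromov hyperbolic space with a WPD loxodromic element on which every $s\in S$ acts elliptically.

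Cases (1) and (3) will use standard hyperbolic complexes. In case (1), I would take the action of the mapping class group on the curve complex $\mathcal{C}(\Sigma)$: this is hyperbolic by Masur--Minsky, the action is non-elementary precisely when $3g+p-3>1$, and any pseudo-Anosov element acts loxodromically and is WPD by Bestvina--Fujiwara. By definition, a reducible mapping class preserves an essential multicurve and so fixes a vertex of $\mathcal{C}(\Sigma)$, hence acts elliptically. In case (3), I would take the $\Out(F_n)$-action on the free factor complex $\mathcal{FF}_n$, which is hyperbolic, non-elementary, and admits WPD loxodromic fully irreducibles by Bestvina--Feighn. A reducible outer automorphism preserves a conjugacy class of proper free factors by definition and therefore fixes a vertex of $\mathcal{FF}_n$.

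Case (2) is the least standard and I expect it to be the main obstacle. The idea is to embed $\pi_1(\Sigma)$ into $\MCG(\Sigma\setminus\{p\})$ via Birman's point-pushing homomorphism $P$, and to restrict the action on $\mathcal{C}(\Sigma\setminus\{p\})$. If $\gamma\in\pi_1(\Sigma,p)$ represents a non-filling curve, then some essential simple closed curve $\alpha$ on $\Sigma$ is disjoint from $\gamma$; the point-push $P_\gamma$ is realized by an isotopy supported in a small neighborhood of $\gamma$ disjoint from $\alpha$, so $P_\gamma$ fixes the vertex $\alpha\in\mathcal{C}(\Sigma\setminus\{p\})$ and acts elliptically. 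For the WPD loxodromic I would invoke Kra's theorem, which asserts that the point-push of a filling loop is pseudo-Anosov on $\Sigma\setminus\{p\}$ and hence WPD on $\mathcal{C}(\Sigma\setminus\{p\})$; the existence of two independent filling loops in $\pi_1(\Sigma)$ for $g\geq 3$ upgrades this to a non-elementary action.

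The final sentence of the corollary is then immediate: passing to a conjugation invariant subset $S'\subseteq S$ keeps every element of $S'$ elliptic in the very same action, so the hypotheses of \cref{thm:mainthmwpd} continue to hold for $(G,S')$ and the $\bZ^m$ quasi-isometric embeddings persist.
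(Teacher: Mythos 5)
Your proposal is correct and follows essentially the same route as the paper: apply \cref{thm:mainthmwpd} with the curve graph for (1), the point-pushing action on $\mathcal{C}(\Sigma\setminus\{p\})$ via the Birman exact sequence together with Kra's theorem for (2), and the free factor graph for (3). The only cosmetic differences are that the paper deduces WPD from Bowditch's acylindricity rather than citing Bestvina--Fujiwara directly, and it gets ellipticity of non-filling point-pushes from the converse direction of Kra's theorem (a power fixes a curve) rather than your direct support argument.
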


\begin{remark}
The use of quasi-morphisms to show infinite diameter results for groups with infinite generating sets goes back at least to \cite{Rhemtulla}. 
In the case of the fundamental group of a closed surface and a generating set consisting of finitely many mapping class group orbits, Calegari previously used quasi-morphisms to prove the Cayley graph has infinite diameter \cite{Cal}, answering a question of Farb. This infinite diameter result was reproved by Margalit-Putman \cite{MP}. A stronger version of the infinite diameter result was obtained by Brandenbursky-Marcinkowski \cite{bm2019}.

Margalit-Putman  \cite{MP} end their introduction by asking if the Cayley graph of a closed surface group with the generating set of simple closed curves is hyperbolic.   \cref{cor:WPDapps} gives a negative answer to this question.  The part of the proof of \cref{cor:WPDapps} required to confirm this answer consists only of combining  the ideas in \cite{bm2019,MP} and \cite{CancelationNorm}, and also has some overlap with \cite{FW}. 
\end{remark}

%\textcolor{red}{
%\begin{remark}
%    If one is only concerned with a generating set consisting of finitely many conjugacy classes, such as the set of Dehn twists in the mapping class groups, then one can immediately apply \cite[Theorem 3.3]{CancelationNorm} as it is stated together with the fact that these groups admit infinite-dimensional spaces of quasimorphisms \cite{BF2010} to obtain the same result. Alternatively, if one has a generating set consisting of finitely many automorphism group orbits, i.e., the set of simple closed curves in a surface group (\'{a} la the question of Margalit-Putman), then one can directly apply \cite[Theorem 3.3]{CancelationNorm} together with the $\Aut$-invariant quasimorphisms built in \cite{FW}.
%\end{remark}
%}

\begin{remark}
    Note that \Cref{thm:qiembedding} applies in a more general setting than \Cref{thm:mainthmwpd}. In particular, there have been numerous applications of the Bestvina-Fujiwara quasimorphism construction in more general contexts in which one does not have a WPD element. For example: diffeomorphism and homeomorphism groups of surfaces \cite{BHW2022}, big mapping class groups \cite{Bavard2016,HQR2022,Rasmussen2021}, and groups acting on CAT(0) spaces with a rank one element \cite{BF2009,CF2010}. For appropriate generating sets, one should be able to obtain a version of \Cref{cor:WPDapps} for groups of this type. 
\end{remark}

%%%%%%%%%%%%%%%%%%%%%%%%%%%%%%%%%%%%%%%%%%%%%%%%%%%%%%%%%%%%%%%%
\subsection{Free groups}
%%%%%%%%%%%%%%%%%%%%%%%%%%%%%%%%%%%%%%%%%%%%%%%%%%%%%%%%%%%%%%%%

Let $F_n$ denote the free group of  rank $n\geq 2$. Recall that an element of $F_n$ is called separable if it is contained in a proper free factor, and let $\sep_n$ denote the set of separable elements of $F_n$. Applying  \cref{thm:qiembedding} together with counting quasi-morphisms, we obtain the following for the free group $F_n$. 

\begin{theorem}\label{thm:mainthm}
    For all $m\in\N$, $\Z^m$ quasi-isometrically embeds into $\Cay(F_n,\sep_n)$.
\end{theorem}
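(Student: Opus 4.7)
The plan is to apply \Cref{thm:qiembedding}: it suffices to exhibit, for each $m \in \N$, $m$ linearly independent homogeneous quasi-morphisms on $F_n$ that are bounded on $\sep_n$. Since $\sep_n$ is closed under taking powers (it is a union of proper free factors, each of which is a subgroup) and homogeneous quasi-morphisms satisfy $h(g^k) = k\, h(g)$, boundedness on $\sep_n$ is equivalent to the vanishing $h|_{\sep_n} \equiv 0$. So the goal is to produce $m$ linearly independent homogeneous quasi-morphisms on $F_n$ vanishing on every separable element.

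The natural source of such quasi-morphisms is the Brooks-type counting construction. For a reduced word $w \in F_n$, the homogenization of $g \mapsto \#_w(g) - \#_{w^{-1}}(g)$ (counting occurrences of $w$ and $w^{-1}$ in the reduced form of $g$) defines a homogeneous quasi-morphism $h_w$. More generally, counting constructions in the style of Epstein--Fujiwara yield a space $Q$ of homogeneous quasi-morphisms on $F_n$ of uncountable dimension---a classical result going back to Brooks, with later refinements by Grigorchuk and Mitsumatsu. Since $F_n$ is countable, $\sep_n$ is countable, and so the subspace $W \subseteq Q$ of quasi-morphisms vanishing on $\sep_n$ is the intersection of countably many hyperplanes, hence has at most countable codimension in $Q$. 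Therefore $W$ is still uncountable-dimensional, and in particular contains $m$ linearly independent elements for every $m$. Feeding such a collection into \Cref{thm:qiembedding} yields the desired quasi-isometric embedding.

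The main potential obstacle is invoking the correct uncountable-dimensionality of $Q$: a countable-dimensional space of quasi-morphisms could in principle be killed by countably many linear conditions, so genuine uncountable dimensionality is really needed. A more constructive alternative would be to select specific reduced words $w$ so that the path they label in the standard Cayley tree $T$ of $F_n$ cannot be realized inside the minimal $A$-invariant subtree $T_A \subsetneq T$ of any proper free factor $A$; since the axis of every separable element lies inside such a subtree, the corresponding $h_w$ then vanishes on $\sep_n$, and pairwise independent such axes yield linear independence by a standard Bestvina--Fujiwara argument. The technical heart of this alternative is making the ``does not embed into $T_A$'' condition uniform over the countably many conjugacy classes of proper free factors of $F_n$.
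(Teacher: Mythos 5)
Your reduction is correct: since $\sep_n$ is closed under taking powers, a homogeneous quasi-morphism is bounded on $\sep_n$ if and only if it vanishes there, so the task is to produce $m$ linearly independent homogeneous quasi-morphisms vanishing on all separable elements and feed them to \Cref{thm:qiembedding}. However, your main argument for producing them has a genuine gap. The subspace $W=\bigcap_{s\in\sep_n}\ker(\mathrm{ev}_s)$ is the kernel of the linear map $\Phi\colon Q\to\R^{\sep_n}$, $q\mapsto (q(s))_{s}$, so its codimension equals $\dim\operatorname{im}(\Phi)$, and $\R^{\sep_n}\cong\R^{\N}$ has dimension $2^{\aleph_0}$. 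An intersection of countably many hyperplanes can therefore have \emph{uncountable} codimension: in $V=\R^{\N}$ the countably many coordinate functionals have common kernel $\{0\}$, which has codimension $2^{\aleph_0}$. Since the space of homogeneous (counting) quasi-morphisms on $F_n$ also has dimension exactly $2^{\aleph_0}$, cardinality considerations do not rule out $\Phi$ being injective, and the soft argument gives no information about $W$ — it could a priori be zero. One really must exhibit specific quasi-morphisms that vanish on $\sep_n$.

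Your ``constructive alternative'' is the right idea and is essentially the paper's route, but the step you defer as ``the technical heart'' is the entire content of the proof. The paper supplies it via Whitehead's Cut Vertex Lemma: every cyclically reduced separable word has a Whitehead graph with a cut vertex, i.e.\ $\sep_n\subset\cut_n$. The explicit words $p_k=a_1^{k+1}a_2^{k+1}\cdots a_n^{k+1}a_1$ have Whitehead graphs with no cut vertex, so $p_k$ cannot occur as a subword of any cyclically reduced word in $\cut_n$ (\cref{R:cut'}); since a power of a cyclically reduced word is cyclically reduced with the same Whitehead graph, the homogenized counting quasi-morphisms $\tilde q_k$ vanish on $\cut_n\supseteq\sep_n$ (\cref{lm:qmsep0}), while $\tilde q_k(p_j)=\delta_{kj}$ gives linear independence (\cref{P:Fnquasis}). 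This single uniform certificate, valid over all proper free factors at once, is exactly the ingredient your sketch leaves unproved.
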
 

\begin{remark}
Fournier-Facio and Wade show that the space of $\Aut(F_n)$-invariant quasi-morphisms on $F_n$ is infinite dimensional  \cite[Theorem B]{FW}, providing an unexpected answer to a question of Abért. One could also use their quasi-morphisms instead of counting quasi-morphisms, although this would be less elementary. In \cite[Theorem E]{FW}, they also produce quasi-morphisms that can be used for certain subsets of the generating sets in \cref{cor:WPDapps}, such as surface groups with the generating set of simple curves and mapping class groups with the generating set of  Dehn twists. 
\end{remark}

\begin{remark}
\Cref{thm:mainthm} recovers in particular that $\Cay(F_n,\sep_n)$ is infinite diameter and non-hyperbolic, facts previously established by Bardakov-Shpilrain-Tolstykh \cite{Palindromic} and Eastham \cite{eastham2024}, respectively. A stronger version of the infinite diameter result was obtained by Brandenbursky-Marcinkowski \cite{bm2019}, and our proof of  \cref{thm:mainthm} again represents a combination of the ideas in \cite{bm2019} and \cite{CancelationNorm}. 
\end{remark}

\begin{remark}
Recall that an element of $F_n$ is called primitive if it is part of a basis, and let $\prim_n$ denote the set of primitive elements of $F_n$. Since primitive elements are separable and every separable element is a product of at most two primitive elements (\cref{rmk:sep-facts}), we have that $\Cay(F_n,\sep_n)$ and $\Cay(F_n,\prim_n)$ are quasi-isometric. 
\end{remark}

\begin{remark}
For any group $G$ and any subgroup $H\subset \Aut(G)$, one can consider generating sets which consist of finitely many $H$-orbits. If $S$ and $S'$ are two such, then the identity map from $\Cay(G,S)$ is $\Cay(G,S')$ is a bi-Lipschitz map and hence in particular a quasi-isometry. In the language of \cite[Section 2.3]{bm2019}, one says that distance to the identity in these Cayley graphs is the $H$-invariant norm on $G$, or more concisely the $H$-norm. Distance to the identity in $\Cay(F_n,\sep_n)$ is the $\Aut(F_n)$-invariant norm on $F_n$.
\end{remark}

%%%%%%%%%%%%%%%%%%%%%%%%%%%%%%%%%%%%%%%%%%%%%%%%%%%%%%%%%%%%%%%%
\subsection{The quotient by conjugation}
%%%%%%%%%%%%%%%%%%%%%%%%%%%%%%%%%%%%%%%%%%%%%%%%%%%%%%%%%%%%%%%%
 Eastham also proves non-hyperbolicity for the quotient of $\Cay(F_n,\sep_n)$ by conjugation, and we can similarly strengthen that result.
% , which she calls the Whitehead complex of $F_n$  

\begin{theorem}\label{thm:whcomplex}
    For all $m \in \N$, $\Z^{m}$ quasi-isometrically embeds into the quotient of $\Cay(F_n,\sep_n)$ by conjugation. 
\end{theorem}

Eastham additionally proves results related to covers, first homology, and the congruence subgroup problem. Her proofs differ from ours and may offer different insights.

%%%%%%%%%%%%%%%%%%%%%%%%%%%%%%%%%%%%%%%%%%%%%%%%%%%%%%%%%%%%%%%%
\subsection{The edge splitting graph}
%%%%%%%%%%%%%%%%%%%%%%%%%%%%%%%%%%%%%%%%%%%%%%%%%%%%%%%%%%%%%%%%
Sabalka-Savchuk obtained related results for the edge splitting graph $\ES_{n}$ of the free group \cite{SS2014}. We also recover this result.

\begin{theorem}[Sabalka-Savchuk]\label{thm:edgesplitting}
    For $n\geq 3$ and all $m \in \N$, $\Z^{m}$ quasi-isometrically embeds into $\ES_{n}$. 
\end{theorem}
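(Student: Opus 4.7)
The plan is to transfer the quasi-isometric embedding from \cref{thm:mainthm} to $\ES_n$ by building a ``subsurface projection analogue'' $\pi : \ES_n \to \Cay(F_n, \sep_n)$ that is coarsely Lipschitz, together with a Lipschitz lift $\tilde f : \Z^m \to \ES_n$ whose composition $\pi \circ \tilde f$ stays at bounded distance from the original $f : \Z^m \to \Cay(F_n, \sep_n)$ of \cref{thm:mainthm}. The chain of inequalities
\[
|\vec i - \vec j| \;\lesssim\; d_{\Cay(F_n,\sep_n)}(f(\vec i), f(\vec j)) \;\lesssim\; d_{\Cay(F_n,\sep_n)}(\pi \tilde f(\vec i), \pi \tilde f(\vec j)) \;\lesssim\; d_{\ES_n}(\tilde f(\vec i), \tilde f(\vec j)),
\]
using in order the quasi-isometric embedding property of $f$, the bounded-distance compatibility of $\pi \tilde f$ with $f$, and the coarse Lipschitz property of $\pi$, combined with the Lipschitz upper bound from $\tilde f$, delivers the quasi-isometric embedding $\Z^m \hookrightarrow \ES_n$.

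The construction of $\pi$ is the crux. A vertex of $\ES_n$ is a one-edge free splitting of $F_n$, which is either an amalgamated product $F_n = A \ast B$ with both factors proper, or an HNN extension $F_n = A \ast_{\{1\}}$ with stable letter $t$. Such a splitting canonically exhibits primitive elements of $F_n$---for instance the stable letter of an HNN extension, or a primitive element of a vertex group of an amalgamation---which are separable by \cref{rmk:sep-facts} and therefore give points in $\Cay(F_n, \sep_n)$. Define $\pi(v)$ to be any such element; the remaining ambiguity is a bounded set of choices, which is harmless for the coarse statement. The key claim, and the main obstacle, is that $\pi$ is coarsely Lipschitz: if $v, w \in \ES_n$ admit a common refinement $V$ (so that $v, w$ are adjacent in $\ES_n$), then the primitive elements associated to $v$ and those associated to $w$ differ by a bounded number of Whitehead-type elementary moves supported on $V$, each of which moves one a uniformly bounded distance in $\Cay(F_n, \sep_n)$.

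For the lift $\tilde f$, I would adapt the elements $p_1, \ldots, p_m$ used to build $f$ to splitting data: modifying the construction of \cref{thm:mainthm} if needed, arrange that each $p_j$ is primitive, hence the stable letter of a one-edge free splitting $v_j^0 \in \ES_n$. Fix a basepoint splitting in $\ES_n$ and define $\tilde f(\vec i)$ by applying, in order, elementary twisting or folding moves along the $v_j^0$ dictated by the word $p_1^{i_1} \cdots p_m^{i_m}$; a single increment of a coordinate corresponds to a bounded number of moves in $\ES_n$, so $\tilde f$ is Lipschitz, and by construction $\pi \tilde f(\vec i)$ coincides with $f(\vec i)$ up to bounded error. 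The heart of the argument is therefore the coarse Lipschitz property of $\pi$---the new subsurface projection analogue advertised in the abstract---whose proof requires a careful combinatorial analysis of how primitive elements transform across refinements of one-edge free splittings.
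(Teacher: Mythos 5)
Your overall strategy---a coarsely Lipschitz ``projection'' out of $\ES_n$ together with a Lipschitz lift of a quasi-flat---is the right shape, and it is the strategy the paper follows. But two of your concrete choices are fatal, and both stem from the same misidentification of the target of the projection.

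First, your map $\pi\colon \ES_n \to \Cay(F_n,\sep_n)$ assigns to each splitting a \emph{primitive element of $F_n$}. Every primitive element of $F_n$ lies in $\sep_n$, so every point in the image of $\pi$ is at distance exactly $1$ from the identity in $\Cay(F_n,\sep_n)$; the image of $\pi$ has diameter at most $2$, and the middle inequality in your chain can never produce a lower bound growing with $\lvert \vec i - \vec j\rvert$. Second, your proposed modification of \cref{thm:mainthm} to ``arrange that each $p_j$ is primitive'' destroys the quasi-flat itself: if each $p_j$ is primitive then each power $p_j^{i_j}$ is separable (\cref{rmk:sep-facts}), so $f(\vec i)=p_1^{i_1}\cdots p_m^{i_m}$ lies within distance $m$ of the identity for all $\vec i$, and moreover the counting quasi-morphisms that certify the lower bound vanish on separable elements. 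The elements $p_k=a_1^{k+1}\cdots a_n^{k+1}a_1$ in the proof of \cref{thm:mainthm} are chosen precisely so that their Whitehead graphs have no cut vertex, i.e.\ so that they are \emph{not} separable.

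The missing idea is to work with a corank-one free factor $H=\langle a_1,\ldots,a_{n-1}\rangle\cong F_{n-1}$ and to value the projection in $\Cay(H,\sep_H)$, where $\sep_H$ means the elements separable \emph{inside $H=F_{n-1}$}, not inside $F_n$. The point is that an element $w\in H$ that is non-separable in $F_{n-1}$ can be arbitrarily far from the identity in $\Cay(H,\sep_H)$ even though it is separable (distance $1$) in $\Cay(F_n,\sep_n)$. The paper embeds $\Cay(H,\sep_H)$ into $\ES_n$ via $i(w)=(H,\langle wa_n\rangle)$ (this uses that $wa_n$ is primitive, so $i$ is Lipschitz), and then builds the projection $r$ not on $\ES_n$ directly but on the free splitting graph minus the point $T_H$ (\cref{T:Lip}), via a case analysis on whether $H$ and $a_n$ act elliptically on a given splitting tree; the composition $r\circ\tau_{BS}\circ i$ is the identity, which forces $i$ to be a quasi-isometric embedding, and then \cref{thm:mainthm} applied to $F_{n-1}$ (this is where $n\geq 3$ is used) finishes the proof. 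Your sketch of why such a projection should be Lipschitz (``bounded number of Whitehead-type moves across a common refinement'') gestures at the hard part but cannot be carried out for the map you defined, since that map is constant up to bounded error.
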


Let us now give some context and discussion of this result. The edge splitting graph is also known as the separating sphere graph, and can be viewed as an analogue of the graph $\cC_{\sep}(\Sigma_g)$ of separating curves on a closed surface $\Sigma_g$ of genus at least 3. (As they are defined here, the graphs $\ES_{2}$ and $\cC_{\sep}(\Sigma_2)$ do not have edges, justifying the restrictions to $n, g\geq 3$.) 

The separating curve graph has long been known to be non-hyperbolic \cite[Exercise 2.42]{SaulNotes}. An idea of proof is as follows. The surface $\Sigma_g$ contains a subsurface $U$ such that both $U$ and $V=\Sigma_g-U$ are genus 0 surfaces with $g+1$ boundary components. Every separating curve must cut both $U$ and $V$, so subsurface projections give a Lipschitz map 
 $$\cC_{\sep}(\Sigma_g) \to \cC(U)\times \cC(V).$$
 Lifting a copy of $\bZ^2$ in $\cC(U)\times \cC(V)$ provides quasi-isometrically embedded copies of $\bZ^2$ in $\cC_{\sep}(\Sigma_g)$.

Related ideas, for example in \cite[Conjecture 3.10]{SaulNotes} and \cite{MS}, have since been extensively developed, yielding a beautiful and powerful framework for understanding many subgraphs of curve graphs in the context of hierarchical hyperbolicity \cites{Vokes, Kopreski, CylinderGraph}. In this framework one calls the $U$ and $V$ above \emph{orthogonal witnesses} for $\cC_{\sep}(\Sigma_g)$, and it is known that the maximum dimension of a quasi-flat is equal to the maximum size of a tuple of orthogonal witnesses \cite[Theorem J]{HHS1}. As a result, \cite[Example 2.4]{Vokes} implies that $\cC_{\sep}(\Sigma_g)$ cannot contain a quasi-isometrically embedded copy of $\bZ^3$. 

In hindsight we can thus say that it is not surprising that $\ES_{n}$ is not hyperbolic, but it \emph{is} surprising to us that it contains quasi-isometrically embedded copies of $\bZ^m$ for all $m$. 
Our point of view is that this wild behavior is not caused by many orthogonal witnesses, but rather it is caused by a single  very badly behaved witness, and that this witness is $\Cay(F_{n-1}, \sep_{n-1})$. 

More precisely, one can identify a vertex $g$ in $\Cay(F_{n-1}, \sep_{n-1})$ with factorization $$F_n = F_{n-1} * \langle g a_n\rangle,$$ where $a_1, \ldots, a_n$ is a basis for $F_n$ and we identify $F_{n-1}$ with $\langle a_1, \ldots, a_{n-1}\rangle$. In this way the vertices of $\Cay(F_{n-1}, \sep_{n-1})$ can be identified with vertices in the free splitting graph, and we show in \cref{T:Lip} that after removing a single vertex from the free splitting graph this copy of $\Cay(F_{n-1}, \sep_{n-1})$ becomes a Lipschitz retract. 

It is an interesting problem, which we do not resolve, whether a similar  Lipschitz retract result is possible with the free splitting graph replaced by free factor graph. The vertices of $\Cay(F_{n-1}, \sep_{n-1})$ are  identified with the complex of free factor systems relative to a corank 1 free factor \cite[Section 2.5]{HM}. In the setup of \cite{HM} this is an exceptional case where the complex is zero dimensional, and viewing it as $\Cay(F_{n-1}, \sep_{n-1})$ adds edges in what seems to be a useful way.

%%%%%%%%%%%%%%%%%%%%%%%%%%%%%%%%%%%%%%%%%%%%%%%%%%%%%%%%%%%%%%%%
\subsection{Previous work finding flats of arbitrary dimension}
%%%%%%%%%%%%%%%%%%%%%%%%%%%%%%%%%%%%%%%%%%%%%%%%%%%%%%%%%%%%%%%%
Quasi-flats of arbitrary dimension are known in a wide range of settings. See for example \cites{GG2016,HW2015,Burillo} in the discrete setting and \cites{Brandenbursky2012,CGHS2024,Kloeckner2010,MR2018,Py2008,PS2023,Shapiro2024} in the continuous setting. %Both \cites{GG2016} and \cite{Brandenbursky2012} make use of quasimorphisms.

%%%%%%%%%%%%%%%%%%%%%%%%%%%%%%%%%%%%%%%%%%%%%%%%%%%%%%%%%%%%%%%%
\subsection{Final remarks}
%%%%%%%%%%%%%%%%%%%%%%%%%%%%%%%%%%%%%%%%%%%%%%%%%%%%%%%%%%%%%%%%
%We suggest some questions and possible directions for future work in \cref{sec:questions}. 
%
%Some examples of groups with conjugation invariant generating sets that have previously been studied are listed in \cite[Chapter 10]{Invitation}.

The geometry of the Cayley graphs we study remains mysterious, and might be further studied by investigating what other spaces quasi-isometrically embed into them (compare to \cite[Theorem 3.6]{CancelationNorm} and \cite[Theorem A]{PS2023}) and by investigating their asymptotic cones (compare to \cite[Section 3]{CalegariZhuang} and \cite{HamCone, Karlhofer, Kedra, Directional}).

Much remains to be understood about the retraction $r$ of  \cref{T:Lip}, including expected generalizations to higher corank free factors, if a Bounded Geodesic Image Theorem holds, if it descends to the free factor complex, etc. 

It would be interesting if $\Out(F_n)$ had retractions to Kolchin subgroups (see \cite{Kolchin}), and \cref{T:Lip} may indicate that this would be easier using an appropriate infinite generating set.  

%%%%%%%%%%%%%%%%%%%%%%%%%%%%%%%%%%%%%%%%%%%%%%%%%%%%%%%%%%%%%%%%
\subsection{Acknowledgments}\label{sec:acknowledgements}
%%%%%%%%%%%%%%%%%%%%%%%%%%%%%%%%%%%%%%%%%%%%%%%%%%%%%%%%%%%%%%%%
AW was partially supported by NSF grant DMS-2142712 and a Sloan Research Fellowship. GD was partially supported by NSF grant DMS-2303262. 
AP and SC were supported by the NSF grant of AW, and CG was supported by the Sloan Research Fellowship of AW. 

We especially thank Becky Eastham, whose work inspired ours, for her encouragement and assistance. We thank Michael Brandenbursky,  Micha\l \;Marcinkowski, and Jacob Russell  for their interest and assistance, and the University of Michigan Math Research Experience for Undergraduates program for its support. We thank 
Danny Calegari, Francesco Fournier-Facio, Camille Horbez, Robert Lyman, Dan Margalit, and Ric Wade for helpful conversations. We thank the referee for their helpful comments.

%%%%%%%%%%%%%%%%%%%%%%%%%%%%%%%%%%%%%%%%%%%%%%%%%%%%%%%%%%%%%%%%
%%%%%%%%%%%%%%%%%%%%%%%%%%%%%%%%%%%%%%%%%%%%%%%%%%%%%%%%%%%%%%%%
\section{Free groups}\label{sec:background} 
%%%%%%%%%%%%%%%%%%%%%%%%%%%%%%%%%%%%%%%%%%%%%%%%%%%%%%%%%%%%%%%%
%%%%%%%%%%%%%%%%%%%%%%%%%%%%%%%%%%%%%%%%%%%%%%%%%%%%%%%%%%%%%%%%

%%%%%%%%%%%%%%%%%%%%%%%%%%%%%%%%%%%%%%%%%%%%%%%%%%%%%%%%%%%%%%%%
\subsection{Background}
%%%%%%%%%%%%%%%%%%%%%%%%%%%%%%%%%%%%%%%%%%%%%%%%%%%%%%%%%%%%%%%%
Let $F_n$ be the free group with basis $a_1, \ldots, a_n$. We always assume $n\geq 2$. A subgroup of $F_n$ is called a free factor if it is generated by a subset of a basis, and we call a free factor proper if it is not all of $F_n$. Recall that an element of $F_n$ is called separable if it is contained in a proper free factor, and is called primitive if it is part of a basis. 
\begin{remark}\label{rmk:sep-facts} We have the following observations.
\begin{enumerate}
    \item Primitive elements are separable.
    \item Powers of separable elements are separable.
    \item The set of separable elements is invariant under $\Aut(F_n)$.
    \item In particular, the set of separable elements is invariant under conjugation. 
    \item If $\langle y_1,\ldots, y_n\rangle$ is a basis of $F_n$ and $a\in\langle y_1,\ldots,y_{n-1}\rangle$, then $\langle y_1, \ldots, y_{n-1}, ay_n\rangle$ is a basis, so $ay_n$ is primitive. 
    \item Every separable element can be written as the product of two primitive elements, since if $a$ is separable there exists a basis $ y_1, \ldots, y_n$ with $a\in\langle y_1,\ldots,y_{n-1}\rangle$, and $a=(ay_n)(\overline{y}_n)$.
\end{enumerate}
\end{remark}
Note that we sometimes use $\overline{g}$ to denote the inverse $g^{-1}$ of a group element $g$. 

\begin{definition}\label{D:W}
    Let $w=l_1\ldots l_k$ be a cyclically reduced word in $F_n$. The Whitehead graph $\Omega(w)$ of $w$ is defined as follows. 
    \begin{enumerate}
    \item There are $2n$ vertices, labeled $a_1, \ldots , a_n,\overline{a}_1,\ldots, \overline{a}_n$.
    \item For every pair of adjacent letters $l_i l_{i+1}$ in $w$, add an edge connecting vertex $l_i$ to $\overline{l}_{i+1}$.
    \item\label{D:W:3}Then add an edge joining vertices $l_k$ and $\overline{l}_1$. We call this the wrap around edge.
    \item When there are multiple edges joining the same pair of vertices, replace them with a single edge.
    \end{enumerate}
    The Whitehead graph of a word that is not cyclically reduced is defined to be the Whitehead graph of its cyclic reduction. 
\end{definition} 

\begin{definition}
    We denote by $\cut_n$ the set of $w\in F_n$ for which $\Omega(w)$ has a cut vertex.
\end{definition}

Here a vertex is a cut vertex if after removing it and any edges adjacent to it the graph is disconnected. Note that, in our definition of $\Omega(w)$, there cannot be multiple edges joining the same pair of vertices. There also cannot be loops from a vertex to itself, because the word is cyclically reduced. Our standing assumption $n\geq 2$ implies that $\Omega(w)$ always has at least 4 vertices, so if $\Omega(w)$ is disconnected then it automatically has a cut vertex.

For a reference on the following, see  \cite{Whitehead1936,stallings1999,WhiteheadShort}. 

\begin{theorem}[Whitehead's Cut Vertex Lemma]
    The Whitehead graph of a separable element has a cut vertex. That is, $\sep_n \subset \cut_n$.
\end{theorem}

We also make use of a slight modification of the Whitehead graph. 
\begin{definition}
    Let $w=l_1\ldots l_k$ be a reduced word in $F_n$. The graph $\Omega'(w)$ is defined as in \cref{D:W} but omitting step \eqref{D:W:3}.
 Let $\cut_n'$ be the set of $w\in F_n$ for which $\Omega'(w)$ has a cut vertex. 
\end{definition} 
We emphasize that, in forming $\Omega'(w)$, we do not cyclically reduce $w$ and we do not add a wrap around edge. See \Cref{fig:WHexamples} for an example of this distinction.

\begin{figure}[!hbt]
      \begin{tikzpicture}
      \tikzstyle{every node}=[font=\large]
      \node (Wh) at (1,5) {$\Omega(w)$};
      \node (a1) at (0,4) {$a_1$};
      \node (a2) at (0,3) {$a_2$};
      \node (a3) at (0,2) {$a_3$};
      \node (a4) at (0,1) {$a_4$};
      \node (a1bar) at (2, 4) {$\overline{a}_1$};
      \node (a2bar) at (2,3) {$\overline{a}_2$};
      \node (a3bar) at (2,2) {$\overline{a}_3$};
      \node (a4bar) at (2,1) {$\overline{a}_4$};

      \graph{
      (a2) -- (a2bar);
      (a2) -- (a3bar);
      (a3) -- (a3bar);
      (a3) -- (a4bar);
      (a4) -- (a4bar);
      };
      \draw (a2bar) .. controls (-1.5,5) and (-1,1) .. (a4);

      \node (Wh') at (6,5) {$\Omega'(w)$};
      \node (b1) at (5,4) {$a_1$};
      \node (b2) at (5,3) {$a_2$};
      \node (b3) at (5,2) {$a_3$};
      \node (b4) at (5,1) {$a_4$};
      \node (b1bar) at (7, 4) {$\overline{a}_1$};
      \node (b2bar) at (7,3) {$\overline{a}_2$};
      \node (b3bar) at (7,2) {$\overline{a}_3$};
      \node (b4bar) at (7,1) {$\overline{a}_4$};

      \graph{
      (b1) -- (b2bar);
      (b2) -- (b2bar);
      (b2) -- (b3bar);
      (b3) -- (b3bar);
      (b3) -- (b4bar);
      (b4) -- (b4bar);
      };
      \draw (b1) .. controls (4,3) and (4,2) .. (b4);

      \end{tikzpicture}

     \caption{An illustration of the difference between $\Omega(w)$ and $\Omega'(w)$ for $w = a_1 a_2^2 a_3^2 a_4^2 \overline{a}_1 \in F_{4}$.}
     \label{fig:WHexamples}
 \end{figure}

\begin{remark}\label{R:cut'}
If $w$ is a  subword of a cyclically reduced word $u$, we have $\Omega'(w)\subset \Omega(u)$. So if $u\in \cut_n$ is cyclically reduced, then every subword of $u$ is in $\cut'_n$. 
\end{remark}

%%%%%%%%%%%%%%%%%%%%%%%%%%%%%%%%%%%%%%%%%%%%%%%%%%%%%%%%%%%%%%%%
\subsection{Many quasi-morphisms}\label{SS:Brooks}
%%%%%%%%%%%%%%%%%%%%%%%%%%%%%%%%%%%%%%%%%%%%%%%%%%%%%%%%%%%%%%%%
We will now see that Whitehead's Lemma implies the following, which is a strengthening of \cite[Lemma 3.4]{bm2019}.

\begin{proposition}\label{P:Fnquasis}
There is an infinite dimensional space of homogeneous quasi-morphisms on $F_n$ that vanish on $\cut_n$. 
\end{proposition}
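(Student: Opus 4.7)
The plan is to construct an infinite family of Brooks-type counting quasi-morphisms on $F_n$, each vanishing on $\cut_n$, and to verify their linear independence.

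\textbf{A seed word.} First I would exhibit a single cyclically reduced word $u\in F_n$ whose graph $\Omega'(u)$ has no cut vertex. For $n=2$, one can take
\[
u = a_1 a_2 \overline{a}_1 \overline{a}_2 a_1 \overline{a}_2 \overline{a}_1 a_2.
\]
A direct computation shows $\Omega'(u)$ is the complete bipartite graph on parts $\{a_1,\overline{a}_1\}$ and $\{a_2,\overline{a}_2\}$, which is easily seen to have no cut vertex. For $n\geq 3$ one extends the construction using all letters.

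\textbf{Monotonicity.} The key observation is that if $u$ is a subword of a reduced word $w$, then every adjacent letter pair appearing in $u$ appears in $w$, so $\Omega'(u)\subseteq \Omega'(w)$ as subgraphs on the common $2n$-vertex set. Since adding edges on a fixed vertex set cannot create a cut vertex, $\Omega'(w)$ also has no cut vertex, i.e.\ $w\notin \cut'_n$. Moreover $\Omega'(w^{-1})=\Omega'(w)$, because reversing the letter sequence sends each pair $(l_i,l_{i+1})$ to $(\overline{l}_{i+1},\overline{l}_i)$, contributing the same unordered edge. Hence $w^{-1}\notin \cut'_n$ as well.

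\textbf{Vanishing on $\cut_n$.} For any non-self-overlapping reduced word $w$ containing $u$ as a subword, define the Brooks counting quasi-morphism
\[
h_w(g) = \#\{\text{occurrences of } w \text{ in the reduced form of } g\} - \#\{\text{occurrences of } w^{-1}\};
\]
by classical theory $h_w$ is a quasi-morphism with bounded defect. Let $\tilde h_w$ be its homogenization. Given $g\in \cut_n$ with cyclic reduction $g_0$, conjugation invariance of homogeneous quasi-morphisms gives $\tilde h_w(g)=\tilde h_w(g_0)$, and $g_0\in\cut_n$ by definition. For each $k\geq 1$, $g_0^k$ is cyclically reduced, and its Whitehead graph agrees with $\Omega(g_0)$ (the internal and wrap-around pairs coincide), so $g_0^k\in \cut_n$. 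Then \cref{R:cut'} implies every subword of $g_0^k$ lies in $\cut'_n$; since $w,w^{-1}\notin \cut'_n$, neither occurs as a subword of $g_0^k$, so $h_w(g_0^k)=0$ for all $k$. Hence $\tilde h_w(g_0)=\lim_k h_w(g_0^k)/k=0$.

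\textbf{Infinite-dimensional family.} Finally I would pick an infinite family $\{w_k\}$ of such words yielding linearly independent $\tilde h_{w_k}$. A natural choice is $w_k = u c^{N_k} u$ for a fixed short word $c$ and a rapidly increasing sequence $N_k$, arranged so that each $w_k$ is non-self-overlapping and no $w_i$ is a subword of $w_j$ for $i\neq j$. Evaluating the pairing $\tilde h_{w_i}(w_j^m)/m$ as $m\to\infty$ then yields an essentially diagonal matrix, establishing linear independence.

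The main technical point will be the final step, where one must check non-self-overlap and pairwise non-containment of the $w_k$; this is a standard but careful combinatorial exercise in the Brooks quasi-morphism tradition. Steps 1--3 follow quickly from the monotonicity observation combined with \cref{R:cut'} and the conjugation invariance of homogeneous quasi-morphisms.
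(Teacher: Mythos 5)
Your proposal follows the same basic strategy as the paper: Brooks counting quasi-morphisms associated to reduced words whose graph $\Omega'$ has no cut vertex, the monotonicity observation that a supergraph on the same spanning vertex set of size at least four cannot acquire a cut vertex, and \cref{R:cut'} to conclude vanishing on $\cut_n$ after homogenizing. Your treatment of the vanishing step is correct, and in fact slightly more explicit than the paper's in that you reduce to the cyclically reduced representative via conjugation invariance of homogeneous quasi-morphisms. The genuine difference is in how linear independence is obtained. The paper takes the counted words themselves to be $p_k = a_1^{k+1}a_2^{k+1}\cdots a_n^{k+1}a_1$; these serve simultaneously as the ``seed'' (their Whitehead graph is a $2n$-cycle, hence cut-vertex free, for every $n\geq 2$, which also disposes of your unaddressed $n\geq 3$ case) and as dual elements, since a short computation gives $q_k(p_j^r)=r\,\delta_{kj}$ exactly, so linear independence is immediate. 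Your route --- a single seed $u$ together with words $w_k=uc^{N_k}u$ --- pushes all the work into the final step: one must check that $w_j^m$ is reduced, count occurrences of $w_i^{\pm 1}$ in $w_j^m$ including those straddling periods or sitting inside the $c$-blocks, and rule out unwanted occurrences when $N_i<N_j$. This is doable but is exactly the combinatorial content you defer, and it is the only place where your argument is not yet a proof; the paper's choice of words is designed so that this step evaporates. If you replace your $w_k$ by the $p_k$ above (or any family of cut-vertex-free words with an explicit diagonal pairing against their own powers), your steps 1--3 already complete the argument.
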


In light of  \cref{thm:qiembedding}, this immediately implies  \cref{thm:mainthm}. 

\begin{customthm}{1.6}
    For all $m \in \N$, $\Z^m$ quasi-isometrically embeds into $\Cay(F_n,\sep_n)$. 
\end{customthm}

\begin{proof}
    \Cref{P:Fnquasis} provides an infinite dimensional space of quasi-morphisms that vanish on $\sep_n$, since $\sup_n \subset \cut_n$. Finally, $\sep_n$ is a conjugation invariant generating set for $F_n$ and so we can apply \Cref{thm:qiembedding} with this collection of quasi-morphisms. 
\end{proof}

Given $w\in F_n$, recall that the \textit{counting quasimorphism} $q_w:F_n\to\R$ is defined as follows \cite{brooks1981}. For a reduced word $x\in F_n$, let $C_w(x)$ and $C_{\overline{w}}(x)$ count occurrences of $w$ and $\overline{w}$ as subwords in $x$, with overlap allowed. Then $q_w(x)=C_w(x)-C_{\overline{w}}(x)$.

For each $k \in \N$ define
\begin{align*}
    p_{k} = a_{1}^{k+1}a_{2}^{k+1}\cdots a_{n}^{k+1}a_{1} \in F_n,
\end{align*}
and let $q_{k}:F_{n} \rightarrow \R$ denote the counting quasimorphism for $w = p_{k}$. Let $\Tilde{q}_k$ denote the homogenization of $q_k$, so 
$$\Tilde{q}_k(s)=\lim_{r\to\infty}\frac{q_k(s^r)}{r}$$
for all $s\in F_n$. 
Observe that 
$$q_k(p_{j}^r) = r \delta_{k,j},$$ 
and hence 
$$\Tilde{q}_k(p_{j}) = \delta_{k,j}.$$
In particular, the $\Tilde{q}_k, k\in \N$ are linearly independent. Thus the following is enough to conclude the proof of \cref{P:Fnquasis}.

\begin{lemma}\label{lm:qmsep0}
    For any  $s\in \cut_n$, $\tilde{q}_k(s)=0$.
\end{lemma}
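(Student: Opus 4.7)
The plan is to show that $q_k(s^r)=0$ for every $r\geq 1$, which immediately forces $\tilde q_k(s)=0$. Since a homogeneous quasi-morphism is conjugation invariant and the condition $s\in\cut_n$ depends only on the conjugacy class of $s$, I may replace $s$ by a cyclic reduction and assume $s$ is itself cyclically reduced. Then the reduced form of $s^r$ is the literal concatenation of $r$ copies of $s$, this word is again cyclically reduced, and $\Omega(s^r)=\Omega(s)$: the junctions between consecutive copies of $s$ contribute exactly the wrap-around edge of $\Omega(s)$, and the final wrap-around edge of $\Omega(s^r)$ is the same.

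The heart of the argument is a direct computation of $\Omega'(p_k)$. Writing $p_k=a_1^{k+1}a_2^{k+1}\cdots a_n^{k+1}a_1$, the consecutive letter pairs are $(a_i,a_i)$ inside each block, $(a_i,a_{i+1})$ at each of the $n-1$ internal junctions, and a final $(a_n,a_1)$. These produce the edges $a_i-\overline a_i$ for all $i$, the edges $a_i-\overline a_{i+1}$ for $i<n$, and the edge $a_n-\overline a_1$, which assemble into the single Hamiltonian cycle
\[
a_1-\overline a_1-a_n-\overline a_n-a_{n-1}-\overline a_{n-1}-\cdots-a_2-\overline a_2-a_1
\]
on all $2n$ vertices. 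Since $n\geq 2$, this cycle has length at least $4$, so $\Omega'(p_k)$ is connected and has no cut vertex. The same holds for $\Omega'(\overline p_k)$, which coincides with $\Omega'(p_k)$ as an unoriented graph since reversing and inverting a word produces the same unordered set of adjacency pairs.

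To conclude, suppose $p_k$ or $\overline p_k$ appears as a subword of $s^r$ for some $r$. By \cref{R:cut'} applied to the cyclically reduced word $s^r$, we get an inclusion $\Omega'(p_k)\subset\Omega(s^r)=\Omega(s)$ on the same vertex set. Then for every vertex $v$, the graph $\Omega(s)\setminus v$ contains the connected spanning subgraph $\Omega'(p_k)\setminus v$ (a cycle with one vertex removed is a path), so $\Omega(s)\setminus v$ is connected and $v$ is not a cut vertex. This contradicts $s\in\cut_n$. Hence $C_{p_k}(s^r)=C_{\overline p_k}(s^r)=0$ for every $r$, so $q_k(s^r)=0$ and $\tilde q_k(s)=0$. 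The only substantive step is recognizing that $\Omega'(p_k)$ is a spanning cycle; once this is in hand, the rest reduces to \cref{R:cut'} plus the easy observation that a graph containing a spanning cycle cannot have a cut vertex.
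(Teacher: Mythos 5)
Your proof is correct and takes essentially the same route as the paper's: verify that $\Omega'(p_k)$ is connected with no cut vertex (the paper asserts this via a figure; you identify it explicitly as a Hamiltonian cycle on all $2n$ vertices), apply \cref{R:cut'} to rule out occurrences of $p_k^{\pm 1}$ in any cyclically reduced word lying in $\cut_n$, and conclude by evaluating on powers. You are somewhat more explicit than the paper about reducing to the case of cyclically reduced $s$ and about handling $\overline{p}_k$, but the argument is the same.
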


\begin{proof}
Note that $\Omega(p_k)=\Omega'(p_k)$ (illustrated in \Cref{fig:pk-whitehead}) is connected with no cut vertices. 
     \begin{figure}[!hbt]
      \begin{tikzpicture}
      \tikzstyle{every node}=[font=\large]
      \node (a1) at (0,4) {$a_1$};
      \node (a2) at (0,3) {$a_2$};
      \node (vdots1) at (0,2) {$\vdots$};
      \node (an1) at (0,0) {$a_{n-1}$};
      \node (an) at (0,-1) {$a_n$};
      \node (a1bar) at (2, 4) {$\overline{a}_1$};
      \node (a2bar) at (2,3) {$\overline{a}_2$};
      \node (vdots2) at (2,1.25) {$\vdots$};
      \node (an1bar) at (2,0) {$\overline{a}_{n-1}$};
      \node (anbar) at (2,-1) {$\overline{a}_n$};
      \graph{
      (a1) -- (a1bar);
      (a1) -- (a2bar);
      (a2) -- (a2bar);
      (an1) -- (an1bar);
      (an1) -- (anbar);
      (an) -- (anbar);
      };
      \draw (a2) -- (2,2);
      \draw (an1bar) -- (0,1);
      \draw (a1bar) .. controls (-2,6) and (-1.5,1) .. (an);
      \end{tikzpicture}

     \caption{An illustration of $\Omega(p_k)=\Omega'(p_k)$}
     \label{fig:pk-whitehead}
 \end{figure}
    Thus  $p_k$ cannot be contained in any cyclically reduced word with a cut vertex (\cref{R:cut'}), so for any such $s$, we have $q_k(s)=0$. As $s^n$ is also cyclically reduced and has the same Whitehead graph, this implies that 
    $\Tilde{q}_k(s)=0.$
\end{proof}

\begin{remark}
\cite[Theorem 3.6]{bm2019} shows that for every non-separable element of $F_n$ there is a homogeneous quasi-morphism which is non-zero on that element but is bounded on the set of all primitive elements (and hence is also bounded on the set of all separable elements). 
\end{remark}

%%%%%%%%%%%%%%%%%%%%%%%%%%%%%%%%%%%%%%%%%%%%%%%%%%%%%%%%%%%%%%%%
\subsection{The quotient by conjugation}
%%%%%%%%%%%%%%%%%%%%%%%%%%%%%%%%%%%%%%%%%%%%%%%%%%%%%%%%%%%%%%%%
We now pause to note that \cref{P:Fnquasis} also implies \cref{thm:whcomplex}, by noting a generalization of \cref{thm:qiembedding}. 

Let $S$ be a conjugation invariant generating set for a group $G$. The action of $G$ on itself by conjugation induces an action of $G$ on $\Cay(G, S)$ by graph automorphisms. We consider the quotient graph $\Cay(G, S)/\Inn(G)$. Two elements of the quotient graph are joined by an edge if and only if they have lifts that are joined by an edge. The fact that the action is by graph automorphisms implies that any path in the quotient can be lifted, so we conclude that if $g,h\in G$, then $$d_{\Cay(G, S)/\Inn(G)}([g],[h])=\min_{k\in G} d_{\Cay(G, S)}(g,\overline{k}hk).$$
Now we observe that if $q$ is any homogeneous quasi-morphism, then 
$$q(g (\overline{k}hk)^{-1}) = q(g h^{-1})+O(1),$$
where the $O(1)$ term depends only on the defect. 
Adding this observation to the proof of \cref{thm:qiembedding} gives the following variant.

\begin{theorem}\label{thm:qiembeddingvariant}
Let $S$ be a conjugation invariant generating set for a group $G$. Suppose $G$ has $m$ linearly independent homogeneous quasi-morphisms which are bounded on $S$. Then there is a quasi-isometric embedding of $\bZ^m$ into $\Cay(G,S)/\Inn(G)$. 
\end{theorem}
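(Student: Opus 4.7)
The plan is to mirror the proof of \cref{thm:qiembedding} almost verbatim, modifying only what is needed to descend to the quotient \(\Cay(G,S)/\Inn(G)\). First I would apply the linear algebra step from \cite[Lemma 3.10]{Autonomous} invoked in the excerpt to obtain homogeneous quasi-morphisms \(q_1, \ldots, q_m\), each bounded on \(S\), together with group elements \(p_1, \ldots, p_m\) satisfying \(q_k(p_j) = \delta_{kj}\). Define \(f:\bZ^m \to \Cay(G,S)\) by \eqref{E:fdef} and let \(\bar{f}\) be its composition with the quotient projection \(\pi\). The Lipschitz upper bound for \(\bar{f}\) is then immediate: \(f\) is Lipschitz into \(\Cay(G,S)\) by the conjugation-invariance-of-\(S\) argument given at the start of the introduction, and \(\pi\) is \(1\)-Lipschitz by definition of the quotient metric.

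For the quasi-isometric lower bound, the identity
\[ d_{\Cay(G,S)/\Inn(G)}([g],[h]) = \min_{k\in G} d_{\Cay(G,S)}(g, \overline{k}hk), \]
noted in the excerpt and justified by path lifting under the \(G\)-action by graph automorphisms, reduces matters to bounding \(d_{\Cay(G,S)}(f(\vec{i}), \overline{k} f(\vec{i}\,') k)\) from below uniformly in \(k \in G\). Combining the estimate \(q_k(f(\vec{i}) f(\vec{i}\,')^{-1}) = i_k - i_k' + O(1)\) recorded in the proof of \cref{thm:qiembedding} with the excerpt's observation that \(q_k(g(\overline{k}hk)^{-1}) = q_k(gh^{-1}) + O(1)\) for every homogeneous quasi-morphism \(q_k\), one obtains
\[ q_k\bigl(f(\vec{i}) \cdot (\overline{k} f(\vec{i}\,') k)^{-1}\bigr) = i_k - i_k' + O(1), \]
with the \(O(1)\) depending only on \(m\) and the defect of \(q_k\), and crucially not on \(k\). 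Since each \(q_k\) is bounded on \(S\) it descends to a Lipschitz function on \(\Cay(G,S)\), so \(|i_k - i_k'| \le C\, d_{\Cay(G,S)}(f(\vec{i}), \overline{k} f(\vec{i}\,') k) + O(1)\) for every \(k\); minimizing the right-hand side over \(k\) and then summing over \(k = 1, \ldots, m\) delivers the desired quasi-isometric lower bound for \(\bar{f}\) into \(\Cay(G,S)/\Inn(G)\).

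There is no genuine obstacle here: both ingredients beyond the proof of \cref{thm:qiembedding} are explicitly highlighted in the paragraph preceding the statement. The only point requiring any care is the verification that the additive \(O(1)\) in the conjugation-invariance estimate can be taken uniform in \(k\), which is automatic because the defect of \(q_k\) is a fixed constant and because the identity \(q_k(\overline{k}) = -q_k(k)\) used to derive it is a purely algebraic consequence of homogeneity, with no \(k\)-dependent constants appearing anywhere in the bound.
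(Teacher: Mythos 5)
Your proposal is correct and follows essentially the same route as the paper: the paper's proof consists precisely of the quotient-distance formula $d_{\Cay(G,S)/\Inn(G)}([g],[h])=\min_{k\in G} d_{\Cay(G,S)}(g,\overline{k}hk)$ together with the observation that $q(g(\overline{k}hk)^{-1})=q(gh^{-1})+O(1)$ with error depending only on the defect, grafted onto the proof of \cref{thm:qiembedding}. Your added care about uniformity in the conjugator is exactly the right (and only) point to check, and it holds since homogeneous quasi-morphisms are conjugation invariant, so the error is controlled by the defect alone.
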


In light of this result, \cref{P:Fnquasis} immediately implies \cref{thm:whcomplex}.

\begin{customthm}{1.11}
    For all $m \in \N$, $\Z^m$ quasi-isometrically embeds into the quotient of $\Cay(F_n,\sep_n)$ by conjugation.
\end{customthm}

\begin{proof}
    \Cref{P:Fnquasis} provides an infinite dimensional space of homogenous quasimorphisms that vanish on $\cut_n$ and hence on $\sep_n$. Thus applying \Cref{thm:qiembeddingvariant} gives the desired result. 
\end{proof}

%%%%%%%%%%%%%%%%%%%%%%%%%%%%%%%%%%%%%%%%%%%%%%%%%%%%%%%%%%%%%%%%
%%%%%%%%%%%%%%%%%%%%%%%%%%%%%%%%%%%%%%%%%%%%%%%%%%%%%%%%%%%%%%%%
\section{The free-splitting graph}\label{sec:free-splitting}
%%%%%%%%%%%%%%%%%%%%%%%%%%%%%%%%%%%%%%%%%%%%%%%%%%%%%%%%%%%%%%%%
%%%%%%%%%%%%%%%%%%%%%%%%%%%%%%%%%%%%%%%%%%%%%%%%%%%%%%%%%%%%%%%%

In this section we assume $n\geq 3$ and require familiarity with the basics of Bass-Serre Theory. 

%%%%%%%%%%%%%%%%%%%%%%%%%%%%%%%%%%%%%%%%%%%%%%%%%%%%%%%%%%%%%%%%
\subsection{Statement of the main result.}
%%%%%%%%%%%%%%%%%%%%%%%%%%%%%%%%%%%%%%%%%%%%%%%%%%%%%%%%%%%%%%%%
Let $FS_n$ denote the free-splitting graph, or in other words the 1-skeleton of the free splitting complex. See for example \cite{HMhyp} for an introduction. Recall that minimal actions of $F_n$ on trees with trivial edge stabilizers give vertices of $FS_n$, and edges of $FS_n$ come from collapse maps. Recall  also that by definition a $k$-vertex splitting is one with $k$ orbits of vertices, and a $k$-edge splitting is one with $k$ orbits of edges. 

Fix a basis $a_1, \ldots, a_n$ of $F_n$, and let $H=\langle a_1, \ldots, a_{n-1}\rangle \cong F_{n-1}$. There is a unique (up to conjugacy) 1-edge 1-vertex free splitting where $H$ fixes a vertex, and we denote this $T_H$. For $w\in H$, there is a unique (up to conjugacy) 1-edge 2-vertex free splitting where $H$ fixes a vertex and $\langle wa_n\rangle$  fixes a vertex, and we denote this $T_{H, w}$. (These facts follow from Bass-Serre Theory; see in particular \cite[Theorem 13]{Trees}.) The main result of this section is the following. 

\begin{theorem}\label{T:Lip}
There is a Lipschitz map 
$$r: FS_n \setminus \{T_H\} \to \Cay(H, \sep_H)$$
such that $r(T_{H,w})=w$. 
\end{theorem}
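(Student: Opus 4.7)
The plan is to construct $r$ by a case analysis on how $H = \langle a_1, \ldots, a_{n-1}\rangle$ acts on a free splitting $T$, exploiting the fact that $H$ is a corank-one free factor of $F_n$: by a rank argument, the only free factor of $F_n$ properly containing $H$ is $F_n$ itself.

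First I would handle the case when $H$ fixes a vertex $v$ of $T$. The rank observation above forces $\mathrm{Stab}(v) = H$. Bass--Serre theory applied to the resulting quotient graph of groups --- which has a single vertex labeled $H$ together with a rank-one remaining contribution to account for $F_n$ --- then shows that this contribution is either a loop at the $H$-vertex (giving $T = T_H$) or an additional vertex with cyclic stabilizer $\langle u\rangle$ where $u$ projects to $\pm 1$ under $\pi: F_n \to F_n/\langle\!\langle H\rangle\!\rangle \cong \mathbb{Z}$, hence $u = wa_n^{\pm 1}$ for a unique $w \in H$ (giving $T = T_{H, w}$). Since $T \neq T_H$ by hypothesis, we have $T = T_{H, w}$ and set $r(T) := w$, immediately verifying $r(T_{H,w}) = w$. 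When $H$ does not fix a vertex, I would instead locate (after a canonical collapse if needed) a vertex of $T$ whose stabilizer is a rank-one free factor of the form $\langle wa_n\rangle$ for some $w \in H$, and set $r(T) := w$; I would then argue that any two such candidates give $w$'s within bounded $\sep_H$-distance.

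For the Lipschitz property, it suffices to bound $d_{\Cay(H, \sep_H)}(r(T), r(T'))$ for adjacent $T \sim T'$ in $FS_n \setminus \{T_H\}$, say with $T'$ refining $T$ by a single edge. The cleanest case is $T = T_{H, w}$: the only available refinements split the $H$-vertex as $H = A * B$, preserving the $\langle wa_n\rangle$-vertex, so $r(T') = w = r(T)$. More generally, a single collapse merges at most two vertex stabilizers of the form $\langle w_1 a_n\rangle, \langle w_2 a_n\rangle$ into a rank-two free factor of $F_n$ with basis $\{w_1 a_n, w_2 a_n\}$. Then $w_1 w_2^{-1} = (w_1 a_n)(w_2 a_n)^{-1}$ lies in $H$, and the constraint that $\{w_1 a_n, w_2 a_n\}$ extends to a basis of $F_n$ forces $w_1 w_2^{-1}$ to lie in a bounded $\sep_H$-neighborhood of the identity, by a basis-change/Whitehead-graph argument (in the spirit of Section \ref{sec:background}) applied after killing $a_n$.

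The main technical obstacle I expect is precisely this general case: defining $r$ canonically when $H$ does not fix a vertex requires a careful selection among potentially many candidate $\langle wa_n\rangle$-vertices, and the Lipschitz bound rests on a uniform separable-length estimate for $w_1 w_2^{-1}$ whenever two such cyclic stabilizers coexist as part of a rank-two free factor of $F_n$. Verifying this uniform estimate --- really a combinatorial statement about rank-two free factors compatible with the corank-one subgroup $H$ --- is where the delicate work will lie.
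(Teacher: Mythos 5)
There is a genuine gap, and it sits exactly where the theorem's content lies: the definition of $r$ on a general free splitting. Your treatment of the case where $H$ is elliptic is essentially fine (and consistent with the paper: the paper also reduces this case to $T=T_{H,w}$ and ultimately outputs $w$, though it phrases the answer via the axis of $a_n$ rather than reading $w$ off the vertex stabilizer). But when $H$ is not elliptic, your recipe --- ``locate, after a canonical collapse if needed, a vertex whose stabilizer is $\langle wa_n\rangle$'' --- does not apply to most points of $FS_n$. A generic free splitting has no vertex with stabilizer of this form at all: for instance the universal cover of the rose on $a_1,\ldots,a_n$ has only trivial vertex stabilizers, and no collapse of it produces a $\langle wa_n\rangle$-vertex (collapses only create stabilizers generated by the collapsed subtree's stabilizers, and there is no canonical choice of which edge orbits to collapse). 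So $r$ is simply undefined on the bulk of the domain, and the Lipschitz analysis you sketch --- which only discusses collapses merging two $\langle w_ia_n\rangle$-vertices --- cannot get off the ground. The paper's construction in this regime is genuinely different: it uses data available for \emph{every} splitting, namely the intersection of $\Axis(a_n)$ with the minimal $H$-invariant subtree $\Core_H(T)$, defining $r(T)$ by an element of $H$ carrying the exit vertex of $\Axis(a_n)$ into a fundamental domain containing the entry vertex; well-definedness up to bounded error and the Lipschitz property are then proved by tracking how entry/exit vertices and fundamental domains transform under a single collapse, with the key input being that if $hD$ is adjacent to a fundamental domain $D$ then $h$ is a product of at most two separables (a Bass--Serre generators argument).

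Two smaller points. First, your claim that the only refinements of $T_{H,w}$ split the $H$-vertex is not quite right: one can also blow up the $\langle wa_n\rangle$-vertex into a line on which $wa_n$ translates, producing a splitting where $H$ is elliptic but $a_n$ is loxodromic; this is why the paper's Case 2 is stated in terms of the axis of $a_n$ through the $H$-fixed point rather than in terms of a $\langle wa_n\rangle$-vertex. Second, your uniform estimate that $\{w_1a_n,w_2a_n\}$ extending to a basis forces $w_1w_2^{-1}$ to be boundedly separable in $H$ is in fact true and cleaner than you suggest ($\langle w_1w_2^{-1}\rangle$ is then a free factor of $F_n$ contained in $H$, hence a free factor of $H$, so $w_1w_2^{-1}$ is primitive in $H$), but the route ``Whitehead graphs after killing $a_n$'' is not the right mechanism and in any case this lemma only services the special splittings your definition covers.
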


Here $\sep_H$ denotes the separable elements of $H$. Variants of \cref{T:Lip} likely hold when $H$ is a higher co-rank free factor, but we will not pursue them here. Speaking imprecisely, $r$ might be viewed as one way to measure ``folding over $H$''.

One can view $r$ as analogous to a subsurface projection. For other analogues of subsurface projection in the context of $\Out(F_n)$ and other Lipschitz maps, see \cite{SSproj, BFproj, TaylorProj, RelTw,Gupta, HMretract, HH}. 

%%%%%%%%%%%%%%%%%%%%%%%%%%%%%%%%%%%%%%%%%%%%%%%%%%%%%%%%%%%%%%%%
\subsection{The edge-splitting graph.}
%%%%%%%%%%%%%%%%%%%%%%%%%%%%%%%%%%%%%%%%%%%%%%%%%%%%%%%%%%%%%%%%
The edge splitting graph $ES_n$ is defined in \cite{SS2014} to be the graph with vertices corresponding to conjugacy classes of (unordered) pairs $(A,B)$  of non-trivial subgroups with $F_n=A\ast B$. If $F_n=A\ast B \ast C$, one adds an edge from $(A\ast B, C)$ to $(A, B\ast C)$.  

Given such a pair $(A,B)$, there is an associated 1-edge 2-vertex free splitting where $A$ and $B$ are both vertex stabilizers \cite[Theorem 7]{Trees}. We denote this $T_{A,B}$. In this way the vertices of $ES_n$ determine vertices of $FS_n$. If there is an edge between two vertices of $ES_n$ then the corresponding vertices of $FS_n$ are both adjacent to a 2-edge 3-vertex splitting. In this way we see that $ES_n$ has a Lipschitz map into $FS_n\setminus\{T_H\}$, which we denote 
$$\tau_{BS} : ES_n \to FS_n\setminus\{T_H\}$$
since it is associated with Bass-Serre Theory. 

There is also a map $$i: \Cay(H, \sep_H)\to ES_n$$ defined by $i(w) = (H, \langle wa_n\rangle)$. Note that if $u\in \sep_H$ we can write $H=H_1\ast H_2$ with $u\in H_2$, and both $i(w)$ and $i(uw)$ are adjacent to $(H_1, H_2\ast \langle w a_n\rangle )$. In particular, we see that $i$ is Lipschitz. 

Theorem \ref{T:Lip} gives that $r \circ \tau_{BS} \circ i$ is the identity. From this we get that $i$ must be a quasi-isometric embedding. Since $ES_n$ contains a quasi-isometrically embedded copy of $\Cay(H, \sep_H)$,  \cref{thm:mainthm} now implies \cref{thm:edgesplitting}. 

%%%%%%%%%%%%%%%%%%%%%%%%%%%%%%%%%%%%%%%%%%%%%%%%%%%%%%%%%%%%%%%%
\subsection{Proof of \Cref{T:Lip}}
%%%%%%%%%%%%%%%%%%%%%%%%%%%%%%%%%%%%%%%%%%%%%%%%%%%%%%%%%%%%%%%%
We start by recalling some basic facts, which we will use below without further comment. We say an action is elliptic if it has a bounded orbit. 

\begin{remark}\label{R:SplittingFacts}
Suppose that $T$ is a free splitting. 
\begin{enumerate}
\item Any subgroup $K$ of $F_n$ has a unique subtree $\Core_K(T)$ of $T$ which is $K$-invariant and on which the $K$ action is minimal \cite[Theorem 3.1]{CM}. If $K$ is finitely generated, it has finitely many $K$-orbits of edges.  
\item If $K\neq \{\id\}$ acts elliptically it has a unique fixed point (because edge stabilizers are trivial) and $\Core_K(T)$ consists of this point. 
\item If a group element $g$ is not elliptic it is loxodromic \cite[Section 1.3]{CM}. In this case $\Core_{\langle g \rangle}(T)$ is an oriented real line, and is also denoted $\Axis(g)$. 
\item\label{R:Hell} If the corank 1 free factor $H$ acts elliptically then $T$ is conjugate to $T_H$ or one of the $T_{H,w}$. (The associated graph of groups is easily determined and determines $T$  \cite[Theorem 13]{Trees}.)
\item\label{R:FiniteIntersection} If $a_n$ is loxodromic, its axis $\Axis(a_n)$ intersects $\Core_H(T)$ in at most finitely many edges (since $\Axis(a_n)$ has finitely many $\langle a_n\rangle$-orbits of edges, $\Core_H(T)$ has finitely many $H$-orbits of edges, and since edge stabilizers are trivial). 
\end{enumerate}
\end{remark}

We define $r(T)$ in four cases. 

\textbf{Case 1: $H$ elliptic, $a_n$ elliptic.} In this case $T=T_{H,a_n}$ and we define $r(T)=\id$ to be the identity element of $H$.  

\textbf{Case 2: $H$ elliptic, $a_n$ not elliptic.} Let $v_H\in T$ denote the elliptic fixed point of $T$. If $\Axis(a_n)$ does not contain $v_H$, define $$r(T)=\id.$$ 

Otherwise, let  $\entrye$ be the edge along which $\Axis(a_n)$ enters the vertex $v_H$ and let $\exite$ be the edge along which $\Axis(a_n)$  leaves $v_H$. So both $\entrye$ and $\exite$ are adjacent to $v_H$. Since $T\neq T_H$, there is only one $H$-orbit of edges adjacent to $v_H$, so  there is a unique $h\in H$ with $h\exite  =\entrye$. We define $r(T)=h$. Thus we have 
$$r(T) \exite  =\entrye.$$

\textbf{Case 3: $H$ not elliptic, $a_n$ elliptic.} In this case we define 
$$r(T)=\id.$$

\textbf{Case 4: $H$ not elliptic, $a_n$ not elliptic.} If $\Axis(a_n)$ is disjoint from $\Core_H(T)$ or these sets intersect in a single point, we define $$r(T)=\id.$$

Otherwise, let $\entryv$ be the vertex at which $\Axis(a_n)$ enters $\Core_H(T)$ and let $\exitv$ be the vertex at which it exits $\Core_H(T)$. 

Let $D_{\entryv}$ be a connected union of edges of $T_H$ that has exactly one edge from each $H$ orbit and contains $\entryv\in D_{\entryv}$. We think of $D_{\entryv}$ as a fundamental domain, but one should keep in mind that vertex stabilizers are typically non-trivial so some caution is required.

We define $r(T)$ to be any choice of element in $H$ such that 
$$r(T) \exitv \in D_{\entryv}.$$
Speaking roughly, this represents an approximate equality $r(T) \exitv \approx \entryv.$

Note that this means our $r$ is not canonically defined, since we have chosen a $D_{\entryv}$  and then chosen this element. However we have the following. 

\begin{lemma}\label{L:AlmostWellDefined}
If $h$ and $h'$ are two different possible definitions of $r(T)$ in Case 4, then $h$ and $h'$ are uniformly close depending only on $n$ in $\Cay(H, \sep_H)$. 
\end{lemma}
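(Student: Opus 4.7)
The plan is to apply Bass-Serre theory to the $H$-action on $\Core_H(T)$ and show that $h{h'}^{-1}$ factors as a product of at most two separable elements, giving a universal bound on $d_{\Cay(H,\sep_H)}(h,h')$.

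First, since $T$ is a free splitting of $F_n$ the edge stabilizers for the $H$-action on $\Core_H(T)$ are trivial, and by the Case~4 hypothesis the action is non-elliptic, so Bass-Serre theory yields a non-trivial free product decomposition
\[
H = G_1 * \cdots * G_V * F_r,
\]
where the $G_i$ are the $H$-stabilizers of chosen representatives of the $V$ vertex orbits and $F_r$ is the free group on stable letters $\{t_e\}$ indexed by the edges of $\mathcal{G} := \Core_H(T)/H$ lying outside a chosen spanning tree. Since $H$ has rank $n-1 \geq 2$, each $G_i$ and each $\langle t_e\rangle$ is a proper free factor of $H$, so every element of these subgroups is separable in $H$.

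Next I would extract a single ``connecting'' element. Set $v := h\exitv$ and $v' := h'\exitv$; both are vertices of $D_{\entryv}$ in the $H$-orbit of $\exitv$. The fundamental domain $D_{\entryv}$ is a subtree of $\Core_H(T)$, so there is a unique simple path in $D_{\entryv}$ from $v'$ to $v$. Its projection to $\mathcal{G}$ is a loop based at the orbit of $\exitv$, and reading off the stable letters $t_e^{\pm 1}$ at the non-spanning-tree crossings (spanning-tree edges contributing nothing) yields an element $g \in F_r \subset H$ with $gv'=v$. Consequently
\[
h{h'}^{-1} = g\cdot\sigma, \qquad \sigma := g^{-1}h{h'}^{-1} \in \operatorname{Stab}_H(v').
\]
Since $\operatorname{Stab}_H(v')$ is a conjugate of some $G_i$ and hence a proper free factor of $H$, the element $\sigma$ is separable.

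To finish, I need $g$ to be separable. Because the path in $D_{\entryv}$ is simple and the quotient map $D_{\entryv} \to \mathcal{G}$ is a bijection on edges, each non-spanning-tree edge is traversed at most once, so each stable letter appears at most once in the expression of $g$ as a word in the basis $\{t_e\}$ of $F_r$; this word is already freely reduced. A standard Nielsen transformation -- replacing the first basis letter appearing in $g$ by $g$ itself -- then exhibits $g$ as a primitive element of $F_r$, so $\langle g\rangle$ is a proper cyclic free factor of $F_r$, and via the decomposition above a proper free factor of $H$; thus $g$ is separable. Combining yields $d_{\Cay(H,\sep_H)}(h,h') \leq 2$, a universal bound (depending only on $n$). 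The one slightly delicate step is the primitivity of $g$, which the simplicity of the path in the tree $D_{\entryv}$ makes tractable via the Nielsen argument; the rest is a clean application of Bass-Serre theory and the fact that separable elements are closed under conjugation.
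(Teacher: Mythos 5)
Your overall skeleton -- write $h(h')^{-1}=g\sigma$ with $\sigma$ in a vertex stabilizer (hence in a conjugate of a proper free factor, hence separable) and $g$ a short product of separables coming from the Bass--Serre decomposition -- is the same idea the paper uses in its follow-up lemma. But there are two genuine gaps. First, you quietly assume $h\exitv$ and $h'\exitv$ lie in the \emph{same} fundamental domain $D_{\entryv}$. The ambiguity in Case 4 includes the choice of $D_{\entryv}$ itself (the paper's proof explicitly takes two domains $D_{\entryv}$ and $D'_{\entryv}$), so your "unique simple path in $D_{\entryv}$ from $v'$ to $v$" is not available in general; you would instead need the geodesic in $\Core_H(T)$ between the two points, whose length is bounded by the diameters of the two domains.

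Second, and more seriously, the claim that reading off only the stable letters along the path produces $g\in F_r$ with $gv'=v$ is false for a general fundamental domain. The edges of $D_{\entryv}$ lying outside the spanning-tree lift need not be the chosen lifts defining the $t_e$; they can be translates of those by vertex-group elements, and then the element carrying $v'$ to $v$ is an alternating product of stable letters \emph{and} vertex-group elements. Concretely, let $H=\langle a\rangle * \langle t_1\rangle * \langle t_2\rangle$ act on the Bass--Serre tree of the one-vertex, two-loop graph of groups with vertex group $A=\langle a\rangle$, and take the fundamental domain consisting of the edge joining $A$ to $t_1A$ together with the lift of the second loop joining $t_1A$ to $t_1at_2A$. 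The element carrying $t_1at_2A$ to $A$ lies in $At_2^{-1}a^{-1}t_1^{-1}$, and no product of stable letters alone works; in particular your Nielsen-move primitivity argument and the resulting bound of $2$ do not go through. The conclusion survives -- each syllable of the normal form is separable and the number of syllables is bounded by the size of the fundamental domain, which depends only on $n$ -- but this yields a uniform constant rather than $2$, and it is essentially the paper's argument: a bounded chain of adjacent translates of $D_{\entryv}$, together with the lemma that adjacency of $g_1D_{\entryv}$ to $D_{\entryv}$ forces $g_1$ to be a product of at most two separables.
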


\begin{proof}
Consider fundamental domains $D_{\entryv}$ and $D_{\entryv}'$, as above, both containing $\entryv$ with $h \exitv \in D_{\entryv}$ and $h' \exitv \in D_{\entryv}'$. We get 
$$D_{\entryv} \cap h (h')^{-1}D'_{\entryv} \neq \emptyset.$$ 

Each of these domains has at most a uniformly finite number of edges, depending only on the rank of the free group. This bound comes from the number of edges in a trivalent graph with fundamental group $F_n$. It follows that there is a uniform bound, depending only on $n$, for how far $h (h')^{-1}$ moves $\entryv$. Thus we can find a uniformly finite chain of translates of $D_{\entryv}$, each sharing a vertex with the previous one, such that $\entryv$ is contained in the first and $h (h')^{-1} \entryv$ is contained in the last. As is implicit in Bass-Serre Theory and recalled in the next lemma, if $g_1 D_{\entryv}$ is adjacent to $D_{\entryv}$ then $g_1$ is separable or a product of two separables. This gives the result.
\end{proof}

\begin{lemma}
Let $H$ be a free group acting minimally on a tree $S$ with trivial edge stabilizers. Let $D$ be a connected subtree containing one edge from each orbit of edges. If $h\in H$ is such that $hD$ is adjacent to $D$, then $h$ is the product of at most two separables. 
\end{lemma}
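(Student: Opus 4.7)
My plan is to apply Bass-Serre theory. Since $H$ acts minimally on $S$ with trivial edge stabilizers, we have a free product decomposition
$$H = H_{1} * \cdots * H_{k} * F_{r},$$
where the $H_i$ represent the $H$-orbits of vertex stabilizers and $F_r$ is free on Bass-Serre generators $t_e$, indexed by the edges of $G = H\backslash S$ lying outside a chosen spanning tree $T_0 \subset G$. Assuming $D$ has an edge (otherwise the lemma is vacuous), the action of $H$ on $S$ is non-trivial, so each $H_i$ is a proper free factor of $H$. Since conjugates of free factors in free groups are again free factors (if $H = A*B$, then $H = (gAg^{-1})*(gBg^{-1})$ as internal free product), every vertex stabilizer $H_w$, $w \in S$, is a proper free factor and its elements are separable. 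Each $t_e$ is a basis element of $F_r$ and thus primitive---hence separable---in $H$.

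I would then pick $v \in hD \cap D$ and set $v' = h^{-1}v \in D$; $v$ and $v'$ lie in the same $H$-orbit. In the easy case $v = v'$, the element $h \in H_v$ is itself a single separable. Otherwise, let $P$ be the unique path from $v'$ to $v$ in $D$. The main idea is to exhibit an element $g \in H$ with $gv' = v$ that is \emph{primitive} in $H$. Granting this, $\sigma := g^{-1}h$ fixes $v'$ and so lies in $H_{v'}$, and thus $h = g\sigma$ is a product of a primitive and an element of a proper free factor---two separables, as desired.

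To construct such a $g$, I would set up Bass-Serre data so that $v'$ is the basepoint lift of $\pi(v') \in G$, and then trace $P$ edge by edge. Each edge of $P$ projects either into $T_0$ (contributing nothing) or into $G \setminus T_0$ (contributing a single Bass-Serre generator $t_e^{\pm 1}$). The key point is that $D$ contains exactly one edge per $H$-orbit of edges, so distinct edges of $P$ project to distinct edges of $G$, which means the Bass-Serre generators appearing in $g$ are pairwise distinct basis elements of $F_r$. A reduced word in distinct basis elements is primitive in $F_r$---it can be swapped for one of the basis elements by Nielsen moves---and since $F_r$ is a free factor of $H$, this makes $g$ primitive in $H$.

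The main obstacle I anticipate is the technical bookkeeping in the tracing argument: naively passing along an edge of $P$ whose image lies in $T_0$ requires remaining on the spanning-tree lift $\tilde T_0 \subset S$, but the actual edge of $D$ may take one off of $\tilde T_0$, introducing vertex-group discrepancies at intermediate vertices. These need to be absorbed into the final factor of $h = g\sigma$, using that $g$ is only defined modulo right-multiplication by $H_{v'}$. A cleaner alternative would be to bypass this entirely by a direct Nielsen change-of-basis argument in the free product: show that $\{g\} \cup \{t_e \text{ not appearing in } g\} \cup \bigcup_i B_i$ (where $B_i$ are chosen free bases of the $H_i$) is a basis of $H$, giving primitivity of $g$ for free.
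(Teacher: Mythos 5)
Your overall strategy --- produce a primitive $g$ with $gv'=v$ by reading off the path $P\subset D$ from $v'=h^{-1}v$ to $v$, and then peel off $\sigma=g^{-1}h\in H_{v'}$ --- is viable and genuinely different from the paper's argument, which never leaves the shared vertex $v$: the paper chooses $D'$ and the generators $t_e$ adapted to $D$ and to $v$, checks that the edges of $D'$ together with the translates $t_e(e)$ that meet $v$ form a complete set of representatives for the $H_v$-orbits of edges at $v$, and concludes directly that $h=s$ or $h=s\,t_e^{\pm1}$ with $s\in H_v$. Your surrounding framework (vertex groups and their conjugates are proper free factors, $t_e$'s are primitive, the reduction $h=g\sigma$) is correct.

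However, there is a genuine gap exactly at the obstacle you flag, and your primary proposed fix does not work. The element read off $P$ in a standard basepointed Bass--Serre presentation has the form $g=s_0t_{e_1}^{\epsilon_1}s_1\cdots t_{e_k}^{\epsilon_k}s_k$ with the $s_i$ lying in stabilizers of the \emph{intermediate} vertices of $P$, and these interior factors cannot be pushed to the right-hand end. Concretely, take $H=H_x*\langle t_\beta,t_\gamma\rangle$ (one vertex, two loops) and let $D$ be the path with vertices $\tilde x$, $t_\beta\tilde x$, $t_\beta a t_\gamma\tilde x$ for some $1\neq a\in H_x$; this is a legitimate connected fundamental domain. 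The elements carrying $v'=\tilde x$ to $v=t_\beta a t_\gamma\tilde x$ form the coset $t_\beta a t_\gamma H_x$, which contains no element of the form $t_\beta^{\pm1}t_\gamma^{\pm1}\sigma$ with $\sigma\in H_{v'}$, since moving $a$ past $t_\gamma$ lands outside every vertex group. Note also that such discrepancies arise between two consecutive non-tree edges, not only from $T_0$-edges leaving $\tilde T_0$. The conclusion you want is still true --- $t_\beta a t_\gamma$ is itself primitive --- and the correct repair is precisely your ``cleaner alternative,'' but applied to the full interleaved word rather than to a word in the $t_e$'s alone: since each $t_{e_i}$ occurs exactly once in $g$, replacing $t_{e_1}$ by $g$ in the basis $\{t_e\}\cup\bigcup_iB_i$ is a change of basis (one recovers $t_{e_1}$ from $g$, the remaining $t_e$'s, and the vertex groups), so $g$ is primitive. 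You also need the small observation that at least one non-tree edge occurs in $P$ whenever $v\neq v'$: a connected subgraph of $D$ all of whose edges project into $T_0$ maps injectively into $G$, so it cannot join two distinct vertices of the same orbit. With these replacements your proof goes through.
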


\begin{proof}
Consider a subtree $D'\subset D$ which maps to a spanning subtree of the quotient $D/H$. We can assume that $D'$ contains the edge of $D$ adjacent to $hD$. Let $v$ be the vertex where $D$ and $hD$ are adjacent. 

Bass-Serre Theory gives that $F$ is the free product of vertex stabilizers of vertices in $D'$, together with one free generator $t_e$ for each edge $e$ in $D$ not in $D'$. This $t_e$ maps the endpoint of $e$ not in $D'$ into $D'$. 

Consider the edges in $D'$ together with the edges $t_e(e)$. If one considers only those edges in this collection adjacent to $v$, this has exactly one representative of each orbit of the stabilizer of $v$ acting on the set of edges adjacent to $v$. It follows that $hD$ can be obtained from $D$ by first possibly applying a $t_e$ and then possibly applying an element of the vertex stabilizer. This gives the result. 
\end{proof}

The most important part of the proof of \cref{T:Lip} is the following. 

\begin{lemma}
 $r$ is Lipschitz.
 \end{lemma}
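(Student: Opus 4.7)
The plan is to prove the Lipschitz property by case analysis. Two vertices $T_1, T_2 \in FS_n$ are adjacent if and only if one collapses to the other via a single $F_n$-orbit of edges, so assume without loss of generality that $T_2 = T_1/\!\sim$ for a collapse map $\pi: T_1 \to T_2$. The goal is to bound $d_{\Cay(H, \sep_H)}(r(T_1), r(T_2))$ by a constant depending only on $n$.

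First I would record preliminary facts about $\pi$: ellipticity of subgroups is preserved when passing from $T_1$ to $T_2$; for any finitely generated subgroup $K$ we have $\pi(\Core_K(T_1)) = \Core_K(T_2)$; and $\pi$ sends $\Axis_{T_1}(a_n)$ onto $\Axis_{T_2}(a_n)$ when $a_n$ remains loxodromic in $T_2$, and collapses it to a single point otherwise. Since only a single $F_n$-orbit of edges is collapsed, the fundamental domain structure for $H$ acting on $\Core_H$ changes only locally.

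Next I would dispose of the cases where $T_1$ or $T_2$ lies in Case 1, 2, or 3. Case 3 gives $r = \id$; if $a_n$ is elliptic on one side of the collapse but not the other, then all of the relevant $\Axis(a_n)$ is contained in the collapsed orbit, which strongly limits how far $r$ can move on the loxodromic side. Cases 1 and 2 correspond to the rigid 1-edge 2-vertex splittings $T_{H,w}$, for which $r$ is either $\id$ or the element determined by the unique $H$-orbit of edges at $v_H$; the bound in these cases follows by direct inspection.

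The main case is when both $T_1$ and $T_2$ are in Case 4. Here, since $\pi$ collapses only a single $F_n$-orbit of edges, the entry and exit vertices of $\Axis(a_n) \cap \Core_H$ in $T_1$ map under $\pi$ to points that differ from the corresponding entry and exit vertices in $T_2$ by a bounded number of edges. A fundamental domain $D_{\entryv}$ in $T_1$ thus pushes forward, after a bounded adjustment, to a fundamental domain near the entry vertex in $T_2$. Combining this with \cref{L:AlmostWellDefined} and the preceding lemma identifying adjacent fundamental-domain translates with products of at most two separables yields the uniform bound on $d_{\Cay(H, \sep_H)}(r(T_1), r(T_2))$. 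The main obstacle is the bookkeeping at case transitions, particularly when $H$ or $a_n$ changes ellipticity under the collapse: one must verify that $r(T_1)$ was already small in these degenerate configurations, so that collapsing to the $r = \id$ convention does not introduce an unbounded jump.
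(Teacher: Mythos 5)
Your proposal follows the paper's proof essentially verbatim: reduce to a single-orbit collapse $c:T\to T'$, record that ellipticity, minimal subtrees, and axes are preserved, and run a case analysis on the types of $T$ and $T'$, with the type-4-to-type-4 case handled by tracking entry/exit vertices and pushing forward a fundamental domain (then invoking \cref{L:AlmostWellDefined}), and the degenerate transitions checked one by one. The ``bookkeeping at case transitions'' you flag as the main obstacle is precisely what the paper carries out and it does go through: for a type 4 to type 2 collapse the element matching exit edge to entry edge in $T'$ already sends $\exitv$ to $\entryv$ in $T$, and for a type 4 to type 3 (or type 1) collapse the axis of $a_n$ is entirely collapsed, so $c(\entryv)=c(\exitv)$ and $r(T)$ moves a point of $\Core_H(T')$ only within one fundamental domain, hence is a bounded product of separables.
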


 \begin{proof}
 It suffices to consider a collapse map $c: T \to T'$ that collapses a single orbit of edges, and give a uniform upper bound for the distance between $r(T)$ and $r(T')$. 

We will speak of $T$ and $T'$ having type 1, 2, 3, or 4, according to which of the 4 cases above it falls under. We recall these in the following table. 
\begin{center}
\begin{tabular}{l|l|l|l} % 'c' for centered text, '|' for vertical lines
    \multicolumn{4}{c}{} \\ % Empty row for top spacing if desired
    %\hline
    & $H$ elliptic & $a_n$ elliptic  &  definition of $r$ \\
    \hline
    1& yes & yes & $r(T)=\id$ \\
    \hline
    2 & yes & no & $r(T)=\id$ or $r(T) \exite  =\entrye$ \\
    \hline
    3 & no & yes & $r(T)=\id$ \\
    \hline
    4 & no & no & $r(T)=\id$ or $r(T) \exitv \in D_{\entryv}$ \\
    \multicolumn{4}{c}{} \\ % Empty row for bottom spacing if desired
\end{tabular}
\end{center}
Note that $T$ must be type 3 or 4, since type 1 and 2 splittings have only one orbit of edges. Additionally, we note that if $a_n$ is elliptic for $T$ it must be elliptic for $T'$. It thus suffices to consider the following cases. In all cases we keep in mind that $$c(\Core_K(T)) = \Core_K(T')$$ for any subgroup $K$, and that this  specializes to a number of useful statements when $K$ is $H$ or $\langle a_n\rangle$.   

\textbf{$T$ type 3 collapses to $T'$ type 1 or type 3:} This is trivial since $r(T)=\id=r(T')$. 

\textbf{$T$ type 4 collapses to $T'$ type 1:} Because the fixed point of $a_n$ in $T'$ is not in $\Core_H(T')$ we see that the axis of $a_n$ in $T$ is disjoint from $\Core_H(T)$, so we have $r(T)=\id=r(T')$. 

\textbf{$T$ type 4 collapses to $T'$ type 2:} 
In this case the axis of $a_n$ in $T$ maps to the axis of $a_n$ in $T'$ under $c$. After the collapse, say $h\in H$ maps the exit edge to the entry edge. Before the collapse $h$ maps an edge on the axis after entry to an edge on the axis before entry. So it exactly maps the exit vertex to the entry vertex.

\textbf{$T$ type 4 collapses to $T'$ type 3:} We have $r(T')=\id$. If the axis of $a_n$ in $T$ is disjoint from $\Core_H(T)$ we also have $r(T)=\id$, so assume this is not the case. Since $r(T) \exitv \in D_{\entryv}$, applying $c$ we get 
$$r(T) c(\exitv) \in c(D_{\entryv}).$$
Since $a_n$ becomes elliptic in $T'$, its axis is mapped to a point, so in particular $c(\entryv)=c(\exitv)$. Since $c(D_{\entryv})$ contains uniformly finitely many edges, we get that $r(T)$ moves a point only a uniformly small distance. Namely, they are only moved at most the number of edges in $c(D_{\entryv})$. Thus (as in \cref{L:AlmostWellDefined}) we get that $r(T)$ is a product of a bounded number of separable elements. 

\textbf{$T$ type 4 collapses to $T'$ type 4:} If the axis of $a_n$ in $T$ is disjoint from $\Core_H(T)$, then the axis of $a_n$ in $T'$ intersects $\Core_H(T')$ in at most a single point, so we get that both $r(T)$ and $r(T')$ are the identity. In the remaining case it suffices to notice that the entry and exit vertices of the axis in $T$ map to the entry and exit vertices in $T'$. 

This concludes the proof that $r$ is Lipschitz. 
\end{proof}

To conclude the proof of \cref{T:Lip}, it now suffices to compute $r(T_{H,w})$. Let $E$ be the edge with one vertex stabilizer $H$ and the other $\langle w a_n\rangle$. We claim that $E \cup wE$ is a fundamental domain for the axis of $a_n$, if $w$ is not the identity. 
% DO NOT DELETE (AW Jun 3 3025): 
%(One way to see this is to note that $a_n$ cannot move an edge to an adjacent edge, so the minimal amount a midpoint of an edge can be moved is 2. Another way is to note that for an edge not on the axis, it's orientation relation to its image is different.)

\begin{figure}[h!]

\tikzset{every picture/.style={line width=0.75pt}} %set default line width to 0.75pt        

\begin{tikzpicture}[x=0.75pt,y=0.75pt,yscale=-1,xscale=1]
%uncomment if require: \path (0,300); %set diagram left start at 0, and has height of 300

%Straight Lines [id:da12255084324152321] 
\draw    (99.86,120) -- (209.86,120) ;
\draw [shift={(209.86,120)}, rotate = 0] [color={rgb, 255:red, 0; green, 0; blue, 0 }  ][fill={rgb, 255:red, 0; green, 0; blue, 0 }  ][line width=0.75]      (0, 0) circle [x radius= 3.35, y radius= 3.35]   ;
\draw [shift={(99.86,120)}, rotate = 0] [color={rgb, 255:red, 0; green, 0; blue, 0 }  ][fill={rgb, 255:red, 0; green, 0; blue, 0 }  ][line width=0.75]      (0, 0) circle [x radius= 3.35, y radius= 3.35]   ;
%Straight Lines [id:da14309158202665573] 
\draw    (209.86,120) -- (319.86,120) ;
\draw [shift={(319.86,120)}, rotate = 0] [color={rgb, 255:red, 0; green, 0; blue, 0 }  ][fill={rgb, 255:red, 0; green, 0; blue, 0 }  ][line width=0.75]      (0, 0) circle [x radius= 3.35, y radius= 3.35]   ;
\draw [shift={(209.86,120)}, rotate = 0] [color={rgb, 255:red, 0; green, 0; blue, 0 }  ][fill={rgb, 255:red, 0; green, 0; blue, 0 }  ][line width=0.75]      (0, 0) circle [x radius= 3.35, y radius= 3.35]   ;
%Straight Lines [id:da512189840607104] 
\draw    (319.86,120) -- (429.86,120) ;
\draw [shift={(429.86,120)}, rotate = 0] [color={rgb, 255:red, 0; green, 0; blue, 0 }  ][fill={rgb, 255:red, 0; green, 0; blue, 0 }  ][line width=0.75]      (0, 0) circle [x radius= 3.35, y radius= 3.35]   ;
\draw [shift={(319.86,120)}, rotate = 0] [color={rgb, 255:red, 0; green, 0; blue, 0 }  ][fill={rgb, 255:red, 0; green, 0; blue, 0 }  ][line width=0.75]      (0, 0) circle [x radius= 3.35, y radius= 3.35]   ;
%Curve Lines [id:da5020326265391123] 
\draw [color={rgb, 255:red, 155; green, 155; blue, 155 }  ,draw opacity=1 ]   (153,96) .. controls (173.44,77.38) and (228.87,77.01) .. (252.45,94.88) ;
\draw [shift={(253.86,96)}, rotate = 219.91] [color={rgb, 255:red, 155; green, 155; blue, 155 }  ,draw opacity=1 ][line width=0.75]    (10.93,-3.29) .. controls (6.95,-1.4) and (3.31,-0.3) .. (0,0) .. controls (3.31,0.3) and (6.95,1.4) .. (10.93,3.29)   ;
%Curve Lines [id:da4186162782686583] 
\draw [color={rgb, 255:red, 155; green, 155; blue, 155 }  ,draw opacity=1 ]   (270,96) .. controls (290.44,77.38) and (345.87,77.01) .. (369.45,94.88) ;
\draw [shift={(370.86,96)}, rotate = 219.91] [color={rgb, 255:red, 155; green, 155; blue, 155 }  ,draw opacity=1 ][line width=0.75]    (10.93,-3.29) .. controls (6.95,-1.4) and (3.31,-0.3) .. (0,0) .. controls (3.31,0.3) and (6.95,1.4) .. (10.93,3.29)   ;

% Text Node
\draw (255,102.4) node [anchor=north west][inner sep=0.75pt]    {$E$};
% Text Node
\draw (139,102.4) node [anchor=north west][inner sep=0.75pt]    {$wE$};
% Text Node
\draw (353,100.4) node [anchor=north west][inner sep=0.75pt]    {$wa_{n} E$};
% Text Node
\draw (191,62.4) node [anchor=north west][inner sep=0.75pt]  [color={rgb, 255:red, 155; green, 155; blue, 155 }  ,opacity=1 ]  {$w^{-1}$};
% Text Node
\draw (305,62.4) node [anchor=north west][inner sep=0.75pt]  [color={rgb, 255:red, 155; green, 155; blue, 155 }  ,opacity=1 ]  {$wa_{n}$};
% Text Node
\draw (203,128.4) node [anchor=north west][inner sep=0.75pt]    {$H$};
% Text Node
\draw (300,128.4) node [anchor=north west][inner sep=0.75pt]    {$\langle wa_{n} \rangle $};

\end{tikzpicture}

\caption{A fundamental domain for the axis of $a_n$ in the splitting $H \ast \langle wa_n\rangle$.}
\label{F:2nhood}
\end{figure}  

The axis for $a_n$ enters the $H$ fixed point via $wE$ and leaves via $E$, so $w$ is the unique element that sends the exit edge to the entry edge.

%%%%%%%%%%%%%%%%%%%%%%%%%%%%%%%%%%%%%%%%%%%%%%%%%%%%%%%%%%%%%%%%
%%%%%%%%%%%%%%%%%%%%%%%%%%%%%%%%%%%%%%%%%%%%%%%%%%%%%%%%%%%%%%%%
\section{Quasimorphism from hyperbolic actions} \label{sec:generalqm}
%%%%%%%%%%%%%%%%%%%%%%%%%%%%%%%%%%%%%%%%%%%%%%%%%%%%%%%%%%%%%%%%
%%%%%%%%%%%%%%%%%%%%%%%%%%%%%%%%%%%%%%%%%%%%%%%%%%%%%%%%%%%%%%%%

In this section we will sketch how to apply the constructions of Fujiwara \cite{Fujiwara1998} and Bestvina-Fujiwara \cite{bf2002} to groups that admit ``nice'' actions on hyperbolic spaces in order to construct quasimorphisms to which \Cref{thm:qiembedding} applies.

\begin{theorem}[Bestvina-Fujiwara]\label{thm:bf}
    Let $G$ be a group that admits a non-elementary action on a geodesic $\delta$-hyperbolic space $X$ with a WPD element. Then there exists an infinite sequence of linearly independent homogeneous quasi-morphisms, each bounded on elliptic elements of the action. 
\end{theorem}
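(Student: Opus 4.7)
The plan is to apply the Bestvina--Fujiwara counting quasimorphism construction and then observe that the resulting homogeneous quasimorphisms automatically vanish on elliptic elements. First, fix a basepoint $x_0 \in X$. For each loxodromic element $h \in G$ with quasi-axis $\gamma_h$ and a sufficiently long finite subpath $w \subset \gamma_h$, I would define a counting function $q_w: G \to \R$ which (up to an additive constant depending on $x_0$) counts the signed excess of copies of $w$ over copies of the reversed path $\overline{w}$ appearing along a chosen geodesic from $x_0$ to $g x_0$. The WPD hypothesis on a loxodromic element is precisely what \cite{bf2002} use to verify that $q_w$ is a quasimorphism, and then its homogenization $\tilde{q}_w$ is a homogeneous quasimorphism.

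The second step is to produce infinitely many of these that are linearly independent. Combining non-elementarity with a WPD loxodromic element, a standard ping-pong argument yields a sequence $h_1, h_2, \ldots$ of loxodromic WPD elements that are pairwise \emph{independent} in the Bestvina--Fujiwara sense (no group element coarsely maps a long subsegment of $\gamma_{h_i}$ near $\gamma_{h_j}$ for $i \ne j$). For $w_i \subset \gamma_{h_i}$ chosen long enough, the matrix $[\tilde{q}_{w_i}(h_j)]$ is essentially diagonal with large diagonal entries, which suffices to extract an infinite linearly independent family after passing to suitable linear combinations.

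The key new observation, and the point that lets \Cref{thm:qiembedding} be applied in the proof of \Cref{thm:mainthmwpd}, is the behavior on elliptic elements. If $g \in G$ acts elliptically on $X$ then $\{g^n x_0 : n \in \Z\}$ is uniformly bounded, say by $C = C(g)$, since every power of an elliptic element is elliptic with orbit contained in the orbit of $g$. A geodesic of length at most $C$ can contain only a bounded number of copies of any fixed long subpath, so $q_w(g^n)$ is bounded independently of $n$, and hence
\[
\tilde{q}_w(g) = \lim_{n\to\infty}\frac{q_w(g^n)}{n} = 0.
\]
Thus each $\tilde{q}_w$ produced this way is identically zero on the set of elliptic elements, which is stronger than merely ``bounded''.

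The hard part, as usual in this circle of ideas, is hidden in the cited black boxes: verifying that the counting function is actually a quasimorphism (rather than an unstructured set-theoretic gadget) relies essentially on WPD to rule out too many overlapping copies of a long subpath accumulating in a single geodesic, and producing the ping-pong sequence of pairwise independent WPD loxodromics requires non-elementarity together with the Bestvina--Fujiwara analysis of elements approximately stabilizing a quasi-axis. The elliptic-boundedness step is by contrast elementary: in a Brooks-type count there is simply nothing to count once the orbit under $\langle g \rangle$ is uniformly bounded.
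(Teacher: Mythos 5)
Your overall route---Brooks--Fujiwara counting quasimorphisms along quasi-axes, an infinite family of pairwise ``non-equivalent'' hyperbolic elements to force linear independence, and the observation that homogenized counts vanish on elliptic elements---is the same as the one the paper takes, and your elliptic step is exactly \Cref{cor:ellipticzero}. Two points in your sketch are misplaced, though. First, you attribute the quasimorphism property of $q_w$ to the WPD hypothesis; that is a misattribution. The verification that $h_w$ is a quasimorphism (\cite[Proposition 3.10]{Fujiwara1998}) uses only $\delta$-hyperbolicity of $X$, via the control that thin triangles give over where copies of a long path can sit on the sides of a geodesic triangle. WPD enters later and elsewhere: it makes the collection of quasi-axes of conjugates ``discrete,'' i.e., $g_{1} \sim g_{2}$ only when positive powers are conjugate, and this is what allows one to find elements whose axes are genuinely non-equivalent, hence nontrivial and independent quasimorphisms.

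Second, your nontriviality and independence step is incomplete as stated. Pairwise non-equivalence $h_i \not\sim h_j^{\pm 1}$ for $i \neq j$ controls the off-diagonal entries of your matrix, but for the diagonal entries to be large you also need the anti-symmetry condition $h_i \not\sim h_i^{-1}$: otherwise copies of the reversed path $\overline{w_i}$ can occur along the axis of $h_i$ with comparable frequency and the signed count cancels. This condition is precisely one of the two conclusions of \cite[Proposition 2]{bf2002}, and obtaining it is where the quasi-convex free subgroup $F = \langle g_1^{N}, g_2^{N}\rangle$ (with only $g_1$ assumed WPD) is actually used. Relatedly, your claim that ping-pong produces a sequence of loxodromic \emph{WPD} elements promises more than the hypothesis delivers and more than is needed: the elements $f_k \in F$ need not be WPD, and the argument only requires one WPD generator. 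Finally, your ``nothing to count on a short geodesic'' argument for ellipticity is right in spirit, but since $h_w$ is defined via an infimum over paths from $x_0$ to $g x_0$ rather than by counting along a single geodesic, one still needs the bounds of \cite[Lemma 3.2 and Lemma 3.9]{Fujiwara1998}; this is exactly the technical point about proper discontinuity that the paper flags before stating \Cref{cor:ellipticzero}.
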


Using this we can immediately apply \Cref{thm:qiembedding} to obtain \Cref{thm:mainthmwpd}. This theorem does not appear in this exact form in \cite{bf2002}, but it follows readily from their constructions. In \Cref{SS:BF} we will give a brief outline of the proof. We first discuss the applications. %%% mentioned in the introduction. 

%%%%%%%%%%%%%%%%%%%%%%%%%%%%%%%%%%%%%%%%%%%%%%%%%%%%%%%%%%%%%%%%
\subsection{Mapping Class Groups}
%%%%%%%%%%%%%%%%%%%%%%%%%%%%%%%%%%%%%%%%%%%%%%%%%%%%%%%%%%%%%%%%
    Let $\Sigma = \Sigma_{g,p}$ be an orientable surface of genus $g$ with $p$ punctures. The complexity of $\Sigma_{g,p}$ is given by $\kappa(\Sigma) = 3g+p-3$. The \emph{mapping class group} of $\Sigma$, denoted $\MCG(\Sigma)$, is the group of isotopy classes of orientation preserving homeomorphisms of $\Sigma$. The \emph{curve graph} of $\Sigma$, denoted $\mathcal{C}(\Sigma)$, is the graph with vertices consisting of isotopy classes of essential, non-peripheral simple closed curves and edges denoting disjointness up to isotopy. 
    
    \begin{corollary}
        Let $\Sigma$ be an orientable surface with $\kappa(\Sigma) > 1$. For all $m \in \N$, $\Z^{m}$ quasi-isometrically embeds into the Cayley graph of $\MCG(\Sigma)$ with respect to the generating set of all reducible mapping classes.
    \end{corollary}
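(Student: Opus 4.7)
The plan is to apply \Cref{thm:mainthmwpd} with $G = \MCG(\Sigma)$, $X = \mathcal{C}(\Sigma)$ the curve graph of $\Sigma$, and $S$ the set of all reducible mapping classes. By the Masur--Minsky theorem, $\mathcal{C}(\Sigma)$ is $\delta$-hyperbolic, and the hypothesis $\kappa(\Sigma) > 1$ ensures that the action of $\MCG(\Sigma)$ on $\mathcal{C}(\Sigma)$ is non-elementary. Bestvina--Fujiwara established that pseudo-Anosov mapping classes act as WPD elements on $\mathcal{C}(\Sigma)$, so the hypotheses of \Cref{thm:mainthmwpd} on the action are in place.

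Next I would verify the three conditions on the generating set $S$. First, reducibility is preserved under conjugation, since the Nielsen--Thurston classification is a conjugacy invariant, so $S$ is conjugation-invariant. Second, every Dehn twist is reducible (it fixes the isotopy class of its twisting curve) and Dehn twists are well-known to generate $\MCG(\Sigma)$, so $S$ is a generating set. Third, if $f \in S$ is reducible then some power $f^k$ fixes an essential multicurve $\gamma$, and the orbit $\{f^i \cdot \gamma : i \in \Z\}$ is a set of at most $k$ vertices of $\mathcal{C}(\Sigma)$; in particular this orbit is bounded, so $f$ acts elliptically on $\mathcal{C}(\Sigma)$. With all hypotheses of \Cref{thm:mainthmwpd} verified, the conclusion follows directly.

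This argument is a clean application of the main theorem and does not present any serious obstacle; the essential content is the standard dictionary between the Nielsen--Thurston classification and the dynamics on the curve graph. If anything, the step most worth flagging is the ellipticity of reducibles, since it is precisely this feature, rather than the existence of an invariant multicurve \emph{per se}, that makes the theorem applicable and forces the generating set $S$ to consist of ``small'' mapping classes in the sense of the action.
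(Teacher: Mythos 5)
Your proposal is correct and follows essentially the same route as the paper: apply \Cref{thm:mainthmwpd} to the action of $\MCG(\Sigma)$ on the curve graph, using Masur--Minsky hyperbolicity, the WPD property of pseudo-Anosovs (the paper deduces this from Bowditch's acylindricity rather than citing Bestvina--Fujiwara directly, an immaterial difference), generation by Dehn twists, and ellipticity of reducibles via a power fixing a curve. No gaps to flag.
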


    \begin{proof}
        The mapping class group acts on $\mathcal{C}(\Sigma)$. In \cite{MM1999} it is shown that $\mathcal{C}(\Sigma)$ is hyperbolic and any pseudo-Anosov mapping class acts hyperbolically. Furthermore, by \cite{Bowditch2008}, this action is acylindrical. In particular, this implies that any hyperbolic element, i.e. any pseudo-Anosov mapping class, is a WPD element. 

        The set of all reducible mapping classes is conjugation-invariant and generates $\MCG(\Sigma)$ \cite{Dehn1987,Lickorish1964}. Furthermore, by definition, a power of each reducible element fixes a curve and thus acts elliptically on $\mathcal{C}(\Sigma)$. Therefore,  \Cref{thm:mainthmwpd} gives the desired result. 
    \end{proof}

%%%%%%%%%%%%%%%%%%%%%%%%%%%%%%%%%%%%%%%%%%%%%%%%%%%%%%%%%%%%%%%%
\subsection{Surface Groups}
%%%%%%%%%%%%%%%%%%%%%%%%%%%%%%%%%%%%%%%%%%%%%%%%%%%%%%%%%%%%%%%%
    Let $\Sigma$ be a closed surface of genus at least $2$. We use point-pushing maps to build an action of $\pi_{1}(\Sigma)$ on the curve graph of $\Sigma$ in order to apply \Cref{thm:mainthmwpd}. Given a point $p \in \Sigma$ we use $\MCG(\Sigma,p)$ to denote the mapping class group of the punctured surface $\Sigma \setminus p$. In this setting we have the Birman Exact Sequence \cite{Birman1969}:
    \begin{align*}
        1 \longrightarrow \pi_{1}(\Sigma,p) \xlongrightarrow{\mathcal{P}} \MCG(\Sigma,p) \xlongrightarrow{\mathcal{F}} \MCG(\Sigma) \longrightarrow 1.
    \end{align*}
    Here $\mathcal{F}$ is the forgetful map obtained by forgetting the puncture $p$ and $\mathcal{P}$ is the \emph{point-pushing} map (see for example \cite[Section 4.2]{FM2012}). The only result about $\mathcal{P}$ we need is the following. %%% Informally, this map can be thought of as placing one's finger at $p$ and pushing along a curve $\gamma \in \pi_{1}(\Sigma,p)$ while dragging the rest of the surface along with it. For more explicit details on this, we direct the reader to \cite[Section 4.2]{FM2012}. We need the following characterization of curves that give rise to pseudo-Anosov mapping classes.

    \begin{theorem}\cite{Kra1981}\label{thm:ptpushpA}
        The point-push map $\mathcal{P}(\gamma) \in \MCG(\Sigma,p)$ is a pseudo-Anosov mapping class if and only if the curve $\gamma \in \pi_{1}(\Sigma,p)$ has nontrivial intersection with every homotopy class of essential simple closed curve on $\Sigma$. 
    \end{theorem}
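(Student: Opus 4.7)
The plan is to prove both implications using the Nielsen--Thurston classification of mapping classes. For the easy direction, I would argue by contrapositive: if $\gamma$ is disjoint up to isotopy from some essential simple closed curve $\alpha$ on $\Sigma$, realize $\gamma$ and $\alpha$ disjointly and support the point-pushing isotopy in a tubular neighborhood of $\gamma$ that is disjoint from $\alpha$. Then $\mathcal{P}(\gamma)$ fixes the isotopy class of $\alpha$ on $\Sigma \setminus p$, so $\mathcal{P}(\gamma)$ is reducible and hence not pseudo-Anosov.

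For the converse, assume $\gamma$ intersects every essential simple closed curve on $\Sigma$. By Nielsen--Thurston, $\mathcal{P}(\gamma)$ is periodic, reducible, or pseudo-Anosov, and I need to rule out the first two possibilities. Periodic is immediate: since $\Sigma$ has genus at least $2$, exactness of the Birman sequence gives that $\mathcal{P}$ is injective, and $\pi_1(\Sigma, p)$ is torsion-free, so $\mathcal{P}(\gamma)$ has infinite order whenever $\gamma \neq 1$ in $\pi_1$, which the filling hypothesis forces.

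The main obstacle is ruling out reducibility. If $\mathcal{P}(\gamma)$ were reducible, some power $\mathcal{P}(\gamma)^N = \mathcal{P}(\gamma^N)$ would preserve the isotopy class on $\Sigma \setminus p$ of an essential simple closed curve $\alpha$, where essential means $\alpha$ is essential on $\Sigma$ and not isotopic to the peripheral loop around $p$. Since $\mathcal{F}(\mathcal{P}(\gamma^N)) = \id$ in $\MCG(\Sigma)$, the curve $\alpha$ is fixed on $\Sigma$ only trivially; the extra constraint comes from requiring the isotopy on $\Sigma \setminus p$ to avoid $p$. The key claim I need is that $\mathcal{P}(\gamma^N)(\alpha) \simeq \alpha$ on $\Sigma \setminus p$ forces the geometric intersection number $i(\gamma^N, \alpha)$ on $\Sigma$ to vanish. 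My approach would be to pass to the annular cover of $\Sigma \setminus p$ associated to $\alpha$: the point-pushing action on this cover should manifest as a translation whose length detects $i(\gamma^N, \alpha)$, so a fixed lift of $\alpha$ forces the intersection number to vanish.

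Granting this key claim, $\gamma^N$ lies up to conjugation in the cyclic subgroup $\langle \alpha \rangle \subset \pi_1(\Sigma)$, and since maximal cyclic subgroups in a closed surface group of genus at least $2$ are root-closed, $\gamma$ itself lies there, giving $i(\gamma, \alpha) = 0$ and contradicting the hypothesis. Thus $\mathcal{P}(\gamma)$ is pseudo-Anosov. The translation-length computation in the annular cover is the crux and the place I expect the most care to be required, since one must correctly account for the fact that $\mathcal{P}(\gamma^N)$ is the identity on $\Sigma$ and the nontriviality of its action on $\Sigma \setminus p$ comes entirely from how the isotopy threads $p$ through $\alpha$.
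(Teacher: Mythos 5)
This theorem is imported from Kra's paper and is not proved here, so the comparison is with the literature: Kra's original argument runs through the Bers fiber space and Teichm\"uller theory, whereas you take the standard topological route via the Nielsen--Thurston classification. Your easy direction and your treatment of the periodic case are correct, and the statement you isolate as the crux --- that $\mathcal{P}(\gamma^N)$ fixing the isotopy class of $\alpha$ in $\Sigma\setminus p$ forces $i(\gamma^N,\alpha)=0$ --- is true and is exactly what is needed. The gap is that your proposed mechanism for proving it does not work. In the annular cover of $\Sigma\setminus p$ associated to $\alpha$, \emph{every} mapping class preserving $[\alpha]$ fixes the core curve up to isotopy, so ``a fixed lift of $\alpha$'' carries no information; and the quantity that cover does detect --- relative twisting about $\alpha$, i.e.\ translation on the arc graph of the annulus --- is perfectly compatible with fixing $\alpha$ (a large power of $T_\alpha$ fixes $\alpha$ and has large relative twisting). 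So exhibiting $i(\gamma^N,\alpha)$ as a translation length there cannot by itself contradict $\mathcal{P}(\gamma^N)(\alpha)\simeq\alpha$. The missing idea is a computation of the stabilizer of $[\alpha]_{\Sigma\setminus p}$ inside the point-pushing subgroup: it is exactly the image in $\pi_1(\Sigma,p)$ of $\pi_1$ of the component of $\Sigma\setminus\alpha$ containing $p$. One way to get this is from the evaluation fibration on the incidence space $\{(C,x)\st x\in\Sigma\setminus C\}$, where $C$ ranges over embedded circles isotopic to $\alpha$: the connecting map of the long exact sequence is the point-push action on $[\alpha]$, and the space of such circles deformation retracts to the geodesic representative, so the stabilizer is carried entirely by the fiber $\Sigma\setminus\alpha$. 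That argument is genuinely different from the one you sketch, and without it the reducible case is not closed.

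Separately, your last step contains an incorrect deduction: from $i(\gamma^N,\alpha)=0$ you conclude that $\gamma^N$ is conjugate into $\langle\alpha\rangle$ and then invoke root-closure of maximal cyclic subgroups. That is false --- vanishing intersection number only says $\gamma^N$ is freely homotopic into the complement of $\alpha$, which is far larger than $\langle\alpha\rangle$. The conclusion you want follows instead by a simpler observation: taking geodesic representatives on a hyperbolic $\Sigma$, the geodesic for $\gamma^N$ is the geodesic for $\gamma$ traversed $N$ times, so $i(\gamma^N,\alpha)=N\,i(\gamma,\alpha)$, hence $i(\gamma,\alpha)=0$, contradicting the filling hypothesis. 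With the stabilizer computation supplied and this final step repaired, your outline does yield a correct proof.
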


    We say such a curve \emph{fills} $\Sigma$. 
    
    \begin{corollary}
        Let $\Sigma$ be a closed surface of genus at least two. For all $m \in \N$, $\Z^{m}$ quasi-isometrically embeds into the Cayley graph of $\pi_{1}(\Sigma)$ with respect to the generating set of all non-filling closed curves on $\Sigma$. 
    \end{corollary}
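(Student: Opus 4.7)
The plan is to apply \Cref{thm:mainthmwpd} to the $\pi_{1}(\Sigma, p)$-action on $\mathcal{C}(\Sigma \setminus p)$ obtained by composing the point-push injection $\mathcal{P}$ with the standard $\MCG(\Sigma, p)$-action. Since $\Sigma$ has genus at least $2$, the centre of $\pi_{1}(\Sigma)$ is trivial, so $\mathcal{P}$ is injective by the Birman exact sequence. The graph $\mathcal{C}(\Sigma \setminus p)$ is $\delta$-hyperbolic by \cite{MM1999}, and the $\MCG(\Sigma, p)$-action on it is acylindrical by \cite{Bowditch2008}; since acylindricity passes to subgroups, the restricted $\pi_{1}(\Sigma)$-action is acylindrical and every loxodromic element is automatically WPD.

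First I would exhibit a WPD element and verify non-elementarity. By \Cref{thm:ptpushpA}, any filling loop $\gamma_{0} \in \pi_{1}(\Sigma, p)$ point-pushes to a pseudo-Anosov, hence acts loxodromically on the curve graph, and by acylindricity is WPD. Picking $\alpha \in \pi_{1}(\Sigma)$ that does not commute with $\gamma_{0}$, the conjugate $\alpha \gamma_{0} \alpha^{-1}$ remains filling (filling is a free-homotopy invariant), so its point-push is again pseudo-Anosov; these two pseudo-Anosovs cannot share an axis on $\partial \mathcal{C}(\Sigma \setminus p)$, since otherwise they would generate a virtually cyclic subgroup of $\MCG(\Sigma, p)$, whose pullback under the injective map $\mathcal{P}$ would force $\gamma_{0}$ and $\alpha \gamma_{0} \alpha^{-1}$ to commute, contradicting the choice of $\alpha$.

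Next I would verify that the generating set $S$ of non-filling homotopy classes satisfies the remaining hypotheses. It is conjugation-invariant because filling depends only on the free homotopy class. It generates $\pi_{1}(\Sigma)$ because every simple closed curve has zero geometric self-intersection (via a disjoint parallel pushoff) and is therefore non-filling, while the standard generators of $\pi_{1}(\Sigma)$ are represented by simple closed curves. Finally, for any $\gamma \in S$, \Cref{thm:ptpushpA} says $\mathcal{P}(\gamma)$ is not pseudo-Anosov, so by the Nielsen-Thurston classification it is finite-order or reducible; in either case it has a bounded orbit in $\mathcal{C}(\Sigma \setminus p)$ (a reducible element preserves a multicurve, yielding a fixed vertex for some power, hence a bounded orbit), i.e.\ acts elliptically.

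With these hypotheses checked, \Cref{thm:mainthmwpd} immediately delivers a quasi-isometric embedding of $\bZ^{m}$ into the Cayley graph of $\pi_{1}(\Sigma)$ with generating set $S$ for every $m \in \N$. The main obstacle is the non-elementarity step, which relies on injectivity of $\mathcal{P}$ to convert an ``axis-sharing'' hypothesis into a commutation relation in $\pi_{1}(\Sigma)$ that can then be contradicted; the remaining steps are a clean translation of Kra's theorem into the Bestvina-Fujiwara/WPD framework afforded by \Cref{thm:mainthmwpd}.
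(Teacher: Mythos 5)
Your proposal is correct and follows essentially the same route as the paper: the point-push action of $\pi_1(\Sigma,p)$ on $\mathcal{C}(\Sigma\setminus p)$, hyperbolicity and acylindricity of the curve graph, Kra's theorem to identify pseudo-Anosov point-pushes (hence a WPD element) and to show non-filling curves act elliptically, and then \Cref{thm:mainthmwpd}. You supply somewhat more detail than the paper on injectivity of $\mathcal{P}$ and on non-elementarity (where your final ``they would commute, contradiction'' step implicitly uses that centralizers in surface groups are cyclic and that surface groups contain no Klein bottle subgroup), but the substance is identical.
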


    \begin{proof}
        Let $p$ be a fixed basepoint in $\Sigma$. The point-push map $\mathcal{P}:\pi_{1}(\Sigma,p) \rightarrow \MCG(\Sigma,p)$ gives an action of $\pi_{1}(\Sigma,p)$ on $\mathcal{C}(\Sigma\setminus p)$. Again, this curve graph is hyperbolic and pseudo-Anosov mapping classes act hyperbolically. Also, the acylindricity of the action of $\MCG(\Sigma,p)$ descends to acylindricity of the action of $\pi_{1}(\Sigma,p)$. That is to say, by taking a filling curve in $\pi_{1}(\Sigma,p)$ and applying \Cref{thm:ptpushpA}, we obtain a WPD element for the action of $\pi_{1}(\Sigma,p)$ on $\mathcal{C}(\Sigma\setminus p)$ 

        The set of all non-filling closed curves in $\pi_{1}(\Sigma,p)$ generates and is conjugation-invariant. Additionally, by \Cref{thm:ptpushpA}, the image of any non-filling closed curve in $\MCG(S,p)$ is not pseudo-Anosov. Thus, some power of it must fix some curve in $\mathcal{C}(\Sigma\setminus p)$ and therefore act elliptically. This allows us to apply \Cref{thm:mainthmwpd} to obtain the desired result. 
    \end{proof}

    \begin{remark}
        This strategy of using point-push maps in order to build quasimorphisms on surface groups was also used in \cite{MP} and \cite[Section 4]{bm2019}. %%% In the latter work, Brandenbursky-Marcinkowski needed finer control over the quasimorphisms they build and hence had to make use of the more sophisticated machinery found in \cite{BBF2016}. 
    \end{remark}

%%%%%%%%%%%%%%%%%%%%%%%%%%%%%%%%%%%%%%%%%%%%%%%%%%%%%%%%%%%%%%%%
\subsection{Outer Automorphisms of Free Groups}
%%%%%%%%%%%%%%%%%%%%%%%%%%%%%%%%%%%%%%%%%%%%%%%%%%%%%%%%%%%%%%%%
    We say that an element of $\Out(F_{n})$ is \emph{reducible} if it fixes the conjugacy class of a proper free factor of $F_{n}$ and is \emph{fully irreducible} if no power is reducible. In this setting the appropriate action on a hyperbolic space is provided by the following. 

    \begin{theorem}\cite[Theorem 9.3]{BF2014} \label{thm:OutFnGraph}
        The free factor graph is $\delta$-hyperbolic and every fully irreducible element of $\Out(F_{n})$ is a WPD element. 
    \end{theorem}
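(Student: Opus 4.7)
My plan is to outline the strategy of Bestvina--Feighn \cite{BF2014}, since this is where the theorem is proved. The argument has two parts, both approached via the geometry of Outer Space $CV_n$ and Stallings folding paths within it.

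For the hyperbolicity of the free factor graph $\FF_n$, I would first define a coarse projection $\pi \colon CV_n \to \FF_n$ by sending a marked graph $G$ to a free factor represented by a proper invariant subgraph of $G$. The key technical step is to show that folding paths in $CV_n$ project to unparametrized quasi-geodesics in $\FF_n$; this requires careful bookkeeping of how the combinatorics of proper invariant subgraphs change as one folds, controlled by monitoring covolumes and illegal turns along the path. With such a family of quasi-geodesics joining any two vertices of $\FF_n$, hyperbolicity would follow from a thin-triangles criterion, essentially by showing that two folding paths with nearby endpoints project close to each other in $\FF_n$.

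For the WPD property, I would use a train-track representative $f \colon \Gamma \to \Gamma$ of a fully irreducible $\phi$ (following Bestvina--Handel) to produce an invariant axis $A_\phi$ in $CV_n$ on which $\phi$ acts by translation; this axis projects to a quasi-geodesic axis in $\FF_n$. To verify WPD, one must show that for each $R>0$ and all sufficiently large $N$, only finitely many $\psi \in \Out(F_n)$ can send an $N$-long segment of $A_\phi$ into an $R$-neighborhood of itself. This is controlled by the fact that such a $\psi$ must approximately preserve the train-track structure, and the bounded combinatorics of train-track maps admit only finitely many such approximate symmetries.

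The main obstacle is the first part: the free factor graph lacks a hierarchical structure analogous to the subsurface projections available for the curve graph, so showing that folding paths project to unparametrized quasi-geodesics is the heart of the proof and requires delicate estimates on how the set of free factors visible along a folding path evolves. Once both parts are in hand, the theorem can be fed directly into \Cref{thm:mainthmwpd}, since reducible outer automorphisms fix a conjugacy class of proper free factor and hence act elliptically on $\FF_n$.
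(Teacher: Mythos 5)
This statement is not proved in the paper at all: it is quoted verbatim as an external result, namely Theorem 9.3 of Bestvina--Feighn's \emph{Hyperbolicity of the complex of free factors}, and the paper's only use of it is as a black box feeding into \Cref{thm:mainthmwpd}. So there is no internal proof to compare against, and the honest assessment is that you have written a roadmap of the cited source rather than an argument that could replace the citation. As a roadmap it is broadly faithful: Bestvina--Feighn do prove hyperbolicity by projecting Outer space to the free factor graph, showing folding paths project to unparametrized quasi-geodesics, and invoking a thin-triangles/guessing-geodesics criterion; and the WPD statement is proved using train-track axes of fully irreducible elements. Your closing remark that reducible elements act elliptically because they preserve the conjugacy class of a proper free factor is exactly how the paper uses the theorem in the corollary that follows.

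The gap, to the extent one wants to call it that, is that every sentence of your sketch beginning with ``the key technical step is'' or ``one must show'' is precisely where the content of a roughly sixty-page paper lives, and your proposal supplies none of it. In particular, your WPD paragraph replaces the actual finiteness mechanism with the slogan that an approximate symmetry of a long axis segment ``must approximately preserve the train-track structure,'' which is a heuristic, not an argument; the Bestvina--Feighn proof rests on a genuine coarse-stabilizer analysis (ultimately tied to the virtually cyclic stabilizer of the attracting/repelling data of a fully irreducible), and the passage from ``approximately preserves'' to ``finitely many'' is exactly the point that needs proof. Since the paper itself treats this as a citation, the right move for you is the same: cite \cite[Theorem 9.3]{BF2014} rather than attempt to reprove it, or else commit to reproducing the full argument, which your outline does not do.
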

    
    \begin{corollary}
        Let $n \geq 2$. For all $m \in \N$, $\Z^{m}$ quasi-isometrically embeds into the Cayley graph of $\Out(F_{n})$ with respect to the generating set of all reducible outer automorphisms. 
    \end{corollary}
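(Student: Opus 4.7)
The plan is to invoke \Cref{thm:mainthmwpd} directly, taking $G = \Out(F_n)$, the hyperbolic space $X$ to be the free factor graph, and the generating set $S$ to be the set of all reducible outer automorphisms. Three items must be verified: that $\Out(F_n)$ admits a non-elementary action on a hyperbolic space with a WPD element; that $S$ is a conjugation-invariant generating set of $\Out(F_n)$; and that each element of $S$ acts elliptically on $X$.

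The first item is furnished by \Cref{thm:OutFnGraph}, which simultaneously provides hyperbolicity of the free factor graph and the WPD property for every fully irreducible element. To upgrade to a non-elementary action, I would combine this with the standard fact that $\Out(F_n)$ contains two independent fully irreducible outer automorphisms whose fixed-point pairs on the Gromov boundary of the free factor graph are disjoint; consequently the orbit of any vertex has more than two limit points at infinity.

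For the second item, reducibility (preservation of the conjugacy class of a proper free factor) is manifestly invariant under conjugation in $\Out(F_n)$, so $S$ is conjugation-invariant. To see that $S$ generates, I would observe that $\Aut(F_n)$ is generated by Nielsen transformations, each of which fixes all but one basis element; such a transformation preserves the cyclic free factor generated by any one of its fixed basis elements (which is proper since $n \geq 2$), and hence descends to a reducible outer automorphism. Inner automorphisms are trivially reducible, so the image of this generating set of $\Aut(F_n)$ in $\Out(F_n)$ lies inside $S$.

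Finally, if $\phi \in \Out(F_n)$ is reducible, then by definition $\phi$ fixes the conjugacy class $[A]$ of a proper free factor $A$; but $[A]$ is precisely a vertex of the free factor graph, so $\phi$ has a fixed vertex in $X$ and therefore acts elliptically. With all three hypotheses in place, \Cref{thm:mainthmwpd} yields the desired quasi-isometric embeddings of $\Z^m$ for every $m \in \N$. There is no serious obstacle: the argument is a direct parallel of the mapping class group and surface group cases handled earlier in the section, and the only step requiring any checking beyond citation is the non-elementarity of the action on the free factor graph.
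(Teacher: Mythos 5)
Your proposal is correct and follows essentially the same route as the paper: apply \Cref{thm:mainthmwpd} to the action of $\Out(F_n)$ on the free factor graph via \Cref{thm:OutFnGraph}, noting that reducible elements fix a vertex (hence act elliptically) and that the set of reducibles is conjugation-invariant and contains the Nielsen generators. The extra details you supply (non-elementarity via two independent fully irreducibles, and the explicit reason the Nielsen generators are reducible) are correct elaborations of steps the paper leaves implicit.
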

    
    \begin{proof}
        Here all of the pieces that we need to apply \Cref{thm:mainthmwpd} are provided by \Cref{thm:OutFnGraph}. The free factor graph is a hyperbolic graph on which $\Out(F_{n})$ acts with a WPD element.  Additionally, every reducible element acts elliptically. Finally we note that set of reducible elements is conjugation-invariant and generates $\Out(F_{n})$ as it contains all of the Nielsen generators \cite{Nielsen1924}. 
    \end{proof}

\subsection{Bestvina-Fujiwara construction}\label{SS:BF}

We first recall some terminology and direct the reader to \cite{bf2002} for more precise statements. An action of a group $G$ on a $\delta$-hyperbolic space $X$ is \emph{non-elementary} if there exist at least two hyperbolic elements whose quasi-axes do not contain rays that stay within bounded distance of each other. For any two hyperbolic elements $g_{1},g_{2} \in G$, we write $g_{1} \sim g_{2}$ if for any arbitrarily long segment in a quasi-axis for $g_{1}$ there is an (orientation-preserving) translation  of this segment so that it lies within a bounded (in terms of the Morse constant) neighborhood of a quasi-axis of $g_{2}$. A hyperbolic element $g$ satisfies the \emph{WPD (weak proper discontinuity)} property if for every $x \in X$ and $\epsilon>0$, there exists $N>0$ such that the set
\begin{align*}
    \{h \in G \vert d(x,h\cdot x) \leq \epsilon \text{ and } d(g^{N}\cdot x,hg^{N} \cdot x) \leq \epsilon\}
\end{align*}
is finite. One interpretation of the WPD condition is that it ensures that the collection of all quasi-axes of conjugates of $g$ is ``discrete'' in $X$. In particular, it implies that $g_{1} \sim g_{2}$ if and only if some positive powers of $g_{1}$ and $g_{2}$ are conjugate \cite[Proposition 6(4)]{bf2002}. 

Next we make use of a construction of quasimorphisms due to Fujiwara \cite{Fujiwara1998} that is modeled on the counting quasimorphisms of Brooks \cite{brooks1981} used in \Cref{SS:Brooks}. We only give a sketch of the construction and direct the reader to \cite[Section 3]{Fujiwara1998} for details. Let $w$ be a finite, oriented path in $X$ and write $|w|$ for the length of $w$. A \emph{copy} of $w$ is any path of the form $g \cdot w$ for $g \in G$. Fujiwara defines a map $h_{w}:G \rightarrow \R$ such that, given a basepoint $x_{0} \in X$, $h_{w}(g)$ coarsely measures the (signed with respect to orientation) number of copies of $w$ seen along a quasi-geodesic connecting $x_{0}$ to $g \cdot x_{0}$. Fujiwara verifies that this map is a quasimorphism in \cite[Proposition 3.10]{Fujiwara1998}. We refer to these quasimorphisms as \emph{Brooks-Fujiwara counting quasimorphisms}. 

The last ingredient from this construction we need is that the homogenizations of these Brooks-Fujiwara counting quasimorphisms vanish on elliptic elements. This follows from \cite[Lemma 3.9]{Fujiwara1998}. Technically, Fujiwara's result is stated in the context of a group acting properly discontinuously on a hyperbolic space. However, throughout \cite[Section 3]{Fujiwara1998}, proper discontinuity is only used to guarantee the existence of a path realizing a certain infimum. With a minor modification, the bounds in \cite[Lemma 3.2]{Fujiwara1998} needed for \cite[Lemma 3.9]{Fujiwara1998} and the following corollary can be obtained without ever appealing to an explicit path. 
\begin{corollary}[Fujiwara]\label{cor:ellipticzero}
    Let $g \in G$ be an elliptic element for the action of $G$ on $X$ and $h_{w}:G \rightarrow \R$ a Brooks-Fujiwara counting quasimorphism. Then $\tilde{h}_{w}(g) = 0$ where $\tilde{h}_{w}$ denotes the homogenization of $h_{w}$. 
\end{corollary}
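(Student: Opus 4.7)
The plan is to show that the unhomogenized values $h_{w}(g^{n})$ are uniformly bounded in $n$, from which $\tilde{h}_{w}(g) = \lim_{n \to \infty} h_{w}(g^{n})/n = 0$ follows immediately.

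The essential input is that $g$ is elliptic, so the orbit $\{g^{n}\cdot x_{0} : n \in \Z\}$ is bounded; fix $R > 0$ with $d(x_{0}, g^{n}\cdot x_{0}) \leq R$ for all $n \in \Z$. By construction of the Brooks-Fujiwara counting quasimorphism, $h_{w}(g^{n})$ coarsely records the signed number of translates of $w$ that appear along an almost-minimizing path in $X$ from $x_{0}$ to $g^{n}\cdot x_{0}$. Since such a path can be chosen with length at most $R + 1$, and each translate of $w$ is a path of definite length $|w|$, only a bounded number of these translates can occur along it; the discrepancies introduced by $\delta$-hyperbolicity and the Morse constant of the ambient quasi-geodesic contribute only a further additive error. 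Hence $|h_{w}(g^{n})| \leq C$ for some $C$ depending on $R$, $|w|$, $\delta$, and the Morse constant, but crucially not on $n$, and the corollary follows after dividing by $n$ and passing to the limit.

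The main obstacle is the one flagged in the paragraph preceding the corollary: the estimate in \cite[Lemma 3.2]{Fujiwara1998} used to derive \cite[Lemma 3.9]{Fujiwara1998} is originally stated by comparing against a path realizing an infimum in the definition of $h_{w}$, whose existence Fujiwara obtains from proper discontinuity of the action. In the present WPD setting no such minimizer need exist. The remedy is to observe that the only role of this path in the argument is to furnish a comparison object whose length is controlled by $d(x_{0}, g^{n}\cdot x_{0})$; consequently one may replace it by an $\epsilon$-near-minimizer for each $\epsilon > 0$, rerun Fujiwara's estimates (which depend on the length of the chosen path, not on its exact optimality), and let $\epsilon \to 0$. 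The bound on $|h_{w}(g^{n})|$ above is unchanged, and Fujiwara's argument then applies verbatim to give $\tilde{h}_{w}(g) = 0$.
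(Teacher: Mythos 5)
Your argument is correct and is essentially the route the paper takes: the paper's proof consists of invoking Fujiwara's Lemma 3.9 (a bounded orbit forces $h_{w}(g^{n})$ to be uniformly bounded, hence the homogenization vanishes) together with exactly the observation you make, namely that proper discontinuity is used only to realize an infimum and can be replaced by $\epsilon$-near-minimizers. Your write-up just fills in the details that the paper delegates to the reference.
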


We are now ready to give a sketch of the proof of \Cref{thm:bf}. We note that the proof is effectively the construction carried out in Sections 2 and 3 of \cite{bf2002} together with \Cref{cor:ellipticzero}. For more details on the construction of infinitely many linearly independent quasimorphisms we direct the reader to the proofs of Theorem 1, Proposition 2, and Proposition 6 in \cite{bf2002}. 

\begin{proof}[Proof of \Cref{thm:mainthmwpd}]
    We begin by running the proof of \cite[Proposition 6(5)]{bf2002}: Let $g_{1}$ be a WPD element and $g_{2}$ another independent hyperbolic element. After passing to powers, the group generated by $g_{1}$ and $g_{2}$ is a free group $F$ so that every nontrivial element acts hyperbolically and the action of $F$ on $X$ is quasi-convex (see \cite[Section 5.2]{Gromov1987} and \cite[Proposition 4.3]{Fujiwara1998}). Now the WPD property of $g_{1}$ ensures that there exists some $f \in F$ so that $g_{1} \not\sim f$. We note that in \cite{bf2002}, the authors assume that every hyperbolic element of $G$ is WPD, however, in the proof of \cite[Proposition 6(5)]{bf2002} one only needs one of the generators of $F$ to be WPD in order to get the desired result. 

    Next, we make use of \cite[Theorem 1 and Proposition 2]{bf2002} to see that we can obtain a sequence of nontrivial, linearly independent quasimorphisms $\{q_{k}:G \rightarrow \R\}_{k=1}^{\infty}$. These quasimorphisms are Brooks-Fujiwara counting quasimorphisms built in the following manner. By \cite[Propostion 2]{bf2002}, we obtain a sequence of hyperbolic elements $f_{1},f_{2},\ldots \in F <G$ such that 
    \begin{enumerate}
        \item $f_{k} \not\sim f_{k}^{-1}$ for $k=1,2,\ldots,$ and 
        \item $f_{k} \not\sim f_{\ell}^{\pm 1}$ for $\ell < k$. 
    \end{enumerate}
    Since $F$ is quasi-convex, we can fix constants $(K,L)$ so that every $f \in F$ has a $(K,L)$-quasi-axis. Properties (1) and (2) ensure that there exists a sequence of powers $a_{k}>0$, depending only on $K,L$, the hyperbolicity constant of $X$, and the translation length of $f_{k}$ so that the quasimorphisms $h_{f_{k}^{a_{k}}}$ are all nontrivial and linearly independent (\cite[Proposition 5]{bf2002}). Here $h_{f_{k}^{a_{k}}}$ denotes the quasimorphism $h_{w_{k}}$ where $w_{k}$ is any choice of geodesic from $x_{0}$ to $f_{k}^{a_{k}} \cdot x_{0}$. Now we set $q_{k} = h_{f_{k}^{a_{k}}}$ for each $k$ and note that, by \Cref{cor:ellipticzero}, $q_{k}(s) = 0$ for all $s \in G$ that acts elliptically on $X$. 
\end{proof}

\bibliography{biblio}
\end{document}